\documentclass[11pt,letterpaper]{amsart}
\usepackage{amsthm}

\usepackage{mathrsfs}
\usepackage{mathtools}
\usepackage{slashed}
\usepackage{tikz-cd}

\usepackage{amssymb}
\usepackage{amsfonts}
\usepackage{amsmath}
\usepackage{graphicx}
\usepackage{xcolor}
\usepackage{amsmath}
\usepackage{mathrsfs}
\usepackage{stmaryrd}
\usepackage{epsfig,color}
\usepackage{blindtext}
\usepackage{enumerate}
\usepackage[colorlinks=true,linkcolor=red,citecolor=blue]{hyperref}
\usepackage[noabbrev,capitalize]{cleveref}
\usepackage[msc-links, lite, alphabetic]{amsrefs}

\usepackage{url}
\usepackage{nicefrac,mathtools}
\DeclareGraphicsExtensions{.pdf,.jpeg,.png}
\usepackage{epstopdf}
\usepackage{cancel} 
\usepackage[normalem]{ulem} 
\usepackage{verbatim} 
\usepackage{enumitem} 
\usepackage{color}

\usepackage{geometry}
\DeclareMathOperator{\divsymb}{div}

\DeclareMathOperator{\vol}{Vol}

\DeclareMathOperator{\Ric}{Ric}

\newcommand{\ha}{\widehat{a}}

\newcommand{\mA}{\mathcal{A}}
\newcommand{\mB}{\mathcal{B}}

\newcommand{\mb}{\mathbb}

\newcommand{\R}{\mathbb{R}}

\newcommand{\defeq}{\mathrel{\mathop:}=}

\def\XXint#1#2#3{{\setbox0=\hbox{$#1{#2#3}{\int}$ }
\vcenter{\hbox{$#2#3$ }}\kern-.6\wd0}}

\def\sideremark#1{\ifvmode\leavevmode\fi\vadjust{\vbox to0pt{\vss
 \hbox to 0pt{\hskip\hsize\hskip1em
 \vbox{\hsize3cm\tiny\raggedright\pretolerance10000
 \noindent #1\hfill}\hss}\vbox to8pt{\vfil}\vss}}}

\newtheorem{theorem}{Theorem}[section]
\newtheorem{mainthm}{Theorem}

\newtheorem{proposition}[theorem]{Proposition}
\newtheorem{lemma}[theorem]{Lemma}
\newtheorem{corollary}[theorem]{Corollary}

\theoremstyle{definition}

\newtheorem{question}[theorem]{Question}

\newtheorem{remark}[theorem]{Remark}

\numberwithin{equation}{section}

\allowdisplaybreaks[4]

\title{Topology of 3-manifolds with nonnegative scalar curvature and positive harmonic functions}

\author[Z. Yan]{Zetian Yan}
\address[Z. Yan]{Department of Mathematics \\ UC Santa Barbara \\ Santa Barbara \\ CA 93106 \\ USA}
\email{ztyan@ucsb.edu}

\author[X. Zhu]{Xingyu Zhu}
\address[X. Zhu]{Department of Mathematics \\ Michigan State University \\ East Lansing \\ MI 48824 \\ USA}
\email{zhuxing3@msu.edu}
\keywords{scalar curvature, harmonic function, topological rigidity} 

\subjclass[2020]{53C21, 53C24, 57K10}

\begin{document}
\begin{abstract}
We study complete \(3\)-manifolds with nonnegative scalar curvature under additional regularity assumptions. We prove that a contractible such manifold is diffeomorphic to $\mathbb{R}^3$, and that an open handlebody admitting such a metric must have genus at most \(1\). The proof uses exhaustions by level sets of harmonic functions and refined average gradient estimates.
\end{abstract}

\maketitle
\section{Introduction}
A central theme in Riemannian geometry is to understand the topological restrictions imposed by curvature bounds. For scalar curvature, the foundational works of Schoen--Yau \cite{SchoenYau79} and Gromov--Lawson \cite{GromovLawson} provide the basic framework. Nevertheless, even in dimension \(3\), determining the topology of manifolds that admit metrics of positive scalar curvature remains a longstanding open problem.

In this paper, we study the topology of \(3\)-manifolds admitting nonnegative scalar curvature under additional assumptions. Our motivation comes from the following question of J. Wang:
\begin{question}[\cites{Wang2024-1,Wang2024-2}]
    If \(M\) is a contractible \(3\)-manifold admitting a complete metric of nonnegative scalar curvature, must \(M\) be diffeomorphic to \(\mathbb{R}^3\)?
\end{question}
We are also motivated by the following question of Gromov, as in Chodosh--Lai--Xu \cite{Chodosh-Lai-Xu2025}:
\begin{question}[\cite{fourlectures}*{\S 3.10.2}]
    If an open handlebody \(M_\gamma\) of genus \(\gamma\) admits a complete metric of nonnegative scalar curvature, must it be the case that \(\gamma \le 1\)?
\end{question}

When \(M\) or \(M_\gamma\) has bounded geometry, both questions were answered affirmatively in \cite{Chodosh-Lai-Xu2025}. In this work, we substantially weaken those strong curvature and noncollapsing assumptions, at the expense of imposing an analytic condition on the behavior of the Green's function at infinity in the nonparabolic case. We now introduce the regularity assumptions needed for our method.
\begin{itemize}
    \item The Ricci curvature is bounded from below, that is,
    \begin{equation}\label{eq:Ricci-lower-bound}
        \Ric_g \ge K, \qquad K \le 0.
    \end{equation}
    
    \item When \(M\) is nonparabolic, its minimal positive Green's function \(G\) with pole at \(p \in M\) vanishes at infinity, that is,
    \begin{equation}\label{eq:Green-vanish}
        \lim_{d_g(x,p)\to\infty} G(x)=0.
    \end{equation}
\end{itemize}

Under these additional assumptions, we obtain the same affirmative conclusions:

\begin{mainthm}\label{Thm:Diffeo-to-R3}
    Let \((M,g)\) be a contractible complete Riemannian \(3\)-manifold with nonnegative scalar curvature \(R_g \ge 0\). Suppose in addition that \((M,g)\) satisfies the regularity assumptions \eqref{eq:Ricci-lower-bound} and \eqref{eq:Green-vanish}. Then \(M\) is diffeomorphic to \(\mathbb{R}^3\).
\end{mainthm}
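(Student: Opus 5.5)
The plan is to exhaust $M$ by compact domains whose boundaries are level sets of a harmonic function. If every such boundary is a $2$-sphere, then $M$ is an increasing union of balls and hence diffeomorphic to $\mathbb{R}^3$. The key topological input is standard. Since $M$ is contractible and orientable, it is irreducible: a contractible $3$-manifold contains no fake balls, by Perelman and Alexander. Hence any embedded $2$-sphere bounds a ball. Moreover, if $M=\bigcup_k \Omega_k$ with each $\Omega_k$ a compact domain bounded by spheres, then, using irreducibility and $H_2(M)=0$, each $\Omega_k$ is contained in a ball $B_k\subset M$. An increasing union of balls, each lying in the interior of the next, is diffeomorphic to $\mathbb{R}^3$ by the classical Brown--Stallings argument. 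So everything reduces to producing an exhaustion by domains with spherical boundary components.

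\textbf{Case split.} In the nonparabolic case take $u=G^{-1}$, or rather $b=G^{-1}$. By \eqref{eq:Green-vanish}, $b$ is proper and $b\to\infty$ at infinity, so the sublevel sets $\{b<t\}$ form a compact exhaustion. In the parabolic case, contractibility gives one end; $H^1$ vanishes, so $M$ has only one end. I would then use a proper harmonic function constructed from Green's functions on exhausting domains normalized at a point, the ``parabolic Green's function'' $u$ with $u\to+\infty$ at infinity and $\Delta u=-\delta_p$. The Ricci lower bound \eqref{eq:Ricci-lower-bound} enters here, through Cheng--Yau gradient estimates, in controlling $|\nabla u|$ locally and guaranteeing properness. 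In both cases, regular level sets $\Sigma_t$ are closed surfaces for a.e. $t$, and each is connected or can be handled componentwise.

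\textbf{Stern-type identity.} On a regular level set, Bochner's formula together with the Gauss equation yields
\[
\int_{\Sigma_t}\Bigl(\tfrac12 R_g+\tfrac12\frac{|\nabla^2 u|^2}{|\nabla u|^2}\Bigr)\le 2\pi\chi(\Sigma_t)+\frac{d}{dt}\Bigl(\int_{\Sigma_t}|\nabla u|\Bigr)\cdot(\text{weight}),
\]
integrated in $t$, in the style of Munteanu--Wang and Agostiniani--Mazzieri--Oronzio monotonicity. Since $R_g\ge0$, any positive genus of level sets forces $\chi\le 0$. Combining this with the monotone quantity $F(t)=\int_{\Sigma_t}|\nabla u|^2$ (suitably weighted), a positive-genus level set on a set of $t$ of positive measure yields a differential inequality. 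Integrating that inequality gives a growth rate for the average gradient incompatible with a refined upper bound on $\frac{1}{|\Sigma_t|}\int_{\Sigma_t}|\nabla u|$ coming from \eqref{eq:Ricci-lower-bound}. Thus $\chi(\Sigma_t)>0$ componentwise, so the boundary components are spheres, for a sequence $t_k\to\infty$. Because $M$ has $H_1=0$, non-separating components are impossible, so each $\{u<t_k\}$ is bounded by spheres, and the topological reduction above finishes the proof.

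\textbf{Main obstacle.} I expect the hard step to be this average gradient estimate: showing that $\int_{\Sigma_t}|\nabla u|^2$ grows at most like the borderline rate permitted by the Euler characteristic term. Otherwise, level sets of genus $\geq1$ could persist for all large $t$ without contradiction. I would first try the monotone quantity in the form $\int_{\Sigma_t}|\nabla u|^2/ u^{\alpha}$ with an optimal $\alpha$, and the upper bound via Cheng--Yau $|\nabla \log u|\le C$ on large balls. I expect the genuine difficulty to lie in obtaining the sharp constant rather than merely some constant, especially in the parabolic case where $u$ grows slowly.
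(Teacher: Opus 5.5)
The gap is in the parabolic case. You claim that there too, assuming $\chi(\Sigma_t)\le 0$ for all large $t$ leads to a contradiction, so that sphere level sets occur for $t_k\to\infty$. This cannot be true. Flat $\mathbb{R}^2\times\mathbb{S}^1_L$ is one-ended and parabolic with $\Ric=0$. The barrier $u=\log|x|$ on $E=\{|x|\ge 1\}\times\mathbb{S}^1_L$ has connected, separating torus level sets, with $\int_{\Sigma_t}|\nabla u|=2\pi L$ and $\int_{\Sigma_t}|\nabla u|^2=2\pi Le^{-t}$. These satisfy every identity and estimate your argument produces, with no contradiction. Your analytic step uses contractibility only through one end and the connectedness and separation of level sets, and all of these hold here, so it cannot yield spheres.

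What the analysis actually gives is the paper's argument. Put $\mathcal{A}_1(t)=\int_{\Sigma_t}|\nabla u|^2$ and $\mathcal{B}_1=\mathcal{A}_1'=\int_{\Sigma_t}\nabla^2u(\mathbf n,\mathbf n)$. Bochner and Gauss give $\mathcal{B}_1'=\frac12\int_{\Sigma_t}\bigl(|\nabla^2u|^2|\nabla u|^{-2}+R_g\bigr)-2\pi\chi(\Sigma_t)$. If $\chi\le 0$ for all large $t$, Cauchy--Schwarz makes $\sqrt{\mathcal{A}_1}$ convex. Then $\mathcal{B}_1>0$ anywhere would force $\mathcal{A}_1\ge ct^2$. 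That contradicts $\mathcal{A}_1\le\sup_{\Sigma_t}|\nabla u|\cdot\int_{\Sigma_t}|\nabla u|\le Ct$, which follows from Cheng--Yau $|\nabla u|\le Cu$ and the constant flux. Hence $\mathcal{B}_1\le 0$ and $\mathcal{B}_1\uparrow 0$, so $\int_t^\infty -2\pi\chi(\Sigma_s)\,ds\le-\mathcal{B}_1(t)<\infty$. This only forces genus exactly one on arbitrarily large level sets: tori, not spheres.

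To conclude you need Lemma~\ref{lem:torus-exhaustion-R3} (J.~Wang; Chodosh--Lai--Xu): a contractible $3$-manifold with a complete metric of $R\ge 0$ and an exhaustion by domains with torus boundary is $\mathbb{R}^3$. This is a second, essential use of $R_g\ge 0$. Your irreducibility/Brown reduction cannot replace it, since the Whitehead manifold is contractible and exhausted by solid tori yet is not $\mathbb{R}^3$.

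Your nonparabolic plan is the paper's in outline. Genus $\ge 1$ removes the positive Gauss--Bonnet term in the Colding--Minicozzi monotonicity, which gives $A_1(r)\ge cr^2$. Cheng--Yau and \eqref{eq:Green-vanish} give $|\nabla G|\le CG\to 0$, hence $A_1(r)\le Cr^2\sup_{\{b=r\}}|\nabla G|=o(r^2)$.

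Smaller corrections:
(i) In both cases the contradiction is an exponent mismatch, not a sharp-constant problem. It uses $\sup_{\Sigma_t}|\nabla u|$ against the constant flux, not an area-averaged gradient bound.
(ii) Your displayed identity differentiates $\int_{\Sigma_t}|\nabla u|$, which is constant. The relevant derivative is that of $\int_{\Sigma_t}\nabla^2u(\mathbf n,\mathbf n)$.
(iii) The contradiction hypothesis must be $\chi(\Sigma_t)\le 0$ for all large regular $t$, not on a set of positive measure.
(iv) ``Componentwise'' handling fails, because the identity sees only the total $\chi(\Sigma_t)$. You need the connectedness of level sets from Lemma~\ref{lem:levelset-connected}.
(v) Properness of the parabolic barrier comes from Nakai's construction, not from Cheng--Yau. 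The Ricci lower bound enters only in the gradient upper bound.
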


\begin{mainthm}\label{genus-at-most-1}
    Let \(M_\gamma\) be the interior of a handlebody of genus \(\gamma\). Suppose that \(M_\gamma\) admits a complete Riemannian metric \(g\) with nonnegative scalar curvature \(R_g \ge 0\), and that \((M_\gamma,g)\) satisfies the regularity assumptions \eqref{eq:Ricci-lower-bound} and \eqref{eq:Green-vanish}. Then \(\gamma \le 1\).
\end{mainthm}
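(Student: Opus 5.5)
The plan is to argue by contradiction: assume \(\gamma\ge 2\), and combine a Stern-type integral inequality for harmonic level sets with a topological lower bound on the genus of the level sets of a suitable exhaustion function. First I will build a proper harmonic exhaustion. If \((M_\gamma,g)\) is nonparabolic, let \(G\) be the minimal positive Green's function with pole \(p\). By \eqref{eq:Green-vanish}, the superlevel sets \(\{G>s\}\) are precompact and exhaust \(M_\gamma\). I will then work with \(u=G^{-1}\) (or \(u=-\log G\)) on \(M_\gamma\setminus\{p\}\). If \((M_\gamma,g)\) is parabolic, \(M_\gamma\) has a single end, so the Nakai--Li--Tam construction gives a harmonic function \(u\) on \(M_\gamma\setminus K\), with \(K\) compact, such that \(u\to+\infty\) at infinity and \(u\) is constant on \(\partial K\). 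In both cases the regular level sets \(\Sigma_t=\{u=t\}\) are compact separating surfaces bounding \(\Omega_t=\{u<t\}\), and these regions exhaust \(M_\gamma\).

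The topological step is to show that, for all large regular values \(t\), the surface \(\Sigma_t\) satisfies \(\chi(\Sigma_t)\le 2(1-\gamma)\le -2\). The argument runs as follows.
\begin{enumerate}[label=(\roman*)]
\item For large \(t\), \(\Omega_t\) contains a fixed spine of the handlebody, so \(H_1(\Omega_t)\to H_1(M_\gamma)\cong\mathbb{Z}^\gamma\) is surjective.
\item Since \(M_\gamma\) is irreducible, every sphere component of \(\Sigma_t\) bounds a ball. Such a ball lies either inside \(\Omega_t\) or in the complement, so I will fill in or excise these spheres without changing the homological information.
\item Half-lives-half-dies, applied to the compact 3-manifold \(\Omega_t\), gives \(\operatorname{rank} H_1(\partial\Omega_t)\ge 2\gamma\) modulo sphere components.
\item One-endedness lets me discard the extra closed components of \(\Sigma_t\) that do not face infinity.
\end{enumerate}
Together these give that the component(s) of \(\Sigma_t\) facing infinity have total genus \(\ge\gamma\), hence \(\int_{\Sigma_t}K_{\Sigma_t}=2\pi\chi(\Sigma_t)\le 4\pi(1-\gamma)\).

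The analytic step integrates Stern's Bochner identity with Gauss--Bonnet over \(\{a<u<b\}\):
\[
\int_a^b\!\!\int_{\Sigma_t}\Big(\tfrac{|\nabla^2u|^2}{2|\nabla u|^2}+\tfrac{R_g}{2}\Big)\,dA\,dt\;\le\;\int_a^b 2\pi\chi(\Sigma_t)\,dt+\int_{\Sigma_b}|\nabla u|\,H-\int_{\Sigma_a}|\nabla u|\,H .
\]
Equivalently, I will work with the derivative of \(\Phi(t)=\int_{\Sigma_t}|\nabla u|^2\). Because \(R_g\ge0\), the left side is nonnegative. Using \(\chi(\Sigma_t)\le-2\), the right side decreases at least linearly, like \(-4\pi(\gamma-1)(b-a)\), up to the two boundary terms. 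So it suffices to show that the boundary term at level \(b\), namely \(\int_{\Sigma_b}|\nabla u|^2\) together with the normal derivative of \(|\nabla u|\), grows sublinearly in \(b\), at least along a sequence \(b_i\to\infty\).

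This growth control is where \eqref{eq:Ricci-lower-bound} enters. The Cheng--Yau gradient estimate gives \(|\nabla \log G|\le C(K)\) away from \(p\), and the coarea formula gives \(\int_{\Sigma_t}|\nabla G|=1\). From these I expect an average gradient estimate: \(\frac{1}{b-a}\int_a^b\Phi(t)\,dt\) stays bounded (or is \(o(b)\)). Choosing good levels \(b_i\) by a mean-value argument should then make the boundary terms \(o(b_i)\), which contradicts the linear negative growth and forces \(\gamma\le1\). I expect this refined average gradient estimate to be the main obstacle. In the nonparabolic case the issue is controlling \(|\nabla u|\) with \(u=G^{-1}\) where \(G\to0\) without volume noncollapsing. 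In the parabolic case it is the analogous bound for the Li--Tam function, which I would first try to obtain by comparing \(u\) with \(\log\) of the distance via Cheng--Yau on annuli.
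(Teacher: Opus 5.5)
\textbf{The gap is in the nonparabolic case.} Your analytic step needs a \emph{harmonic} function whose range near infinity is unbounded. Only then does the topological term $-2\pi\chi(\Sigma_t)\ge 4\pi(\gamma-1)$ accumulate linearly in $t$, and in the nonparabolic case no such function exists.

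Your displayed Stern identity is false for $u=G^{-1}$ and for $u=-\log G$. Here $\Delta G^{-1}=2G\,|\nabla G^{-1}|^2$ and $\Delta(-\log G)=|\nabla\log G|^2$, and the extra terms this produces in $\Delta|\nabla u|$ have no sign. If instead you apply the identity to $G$ itself, the levels near infinity are $\{G=s\}$ with $s\in(0,s_0]$. The topological gain between two levels is then only $4\pi(\gamma-1)(s_2-s_1)\le 4\pi(\gamma-1)s_0$, so nothing diverges. Your expected growth bound also fails: for $u=G^{-1}$ on $\R^3$ one has $\int_{\{u=t\}}|\nabla u|^2=4\pi t^2$.

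What is missing is the Hessian term you discarded, bounded below by Cauchy--Schwarz as in Lemma~\ref{lem:parabolic-derivative-a}. In the paper this, through the Colding--Minicozzi quantity $A_1$, yields $ra'\ge 1-a^2/4$ and hence $A_1(r)\ge cr^2$. That contradicts $A_1(r)=o(r^2)$, which follows from $G\to0$ and Cheng--Yau.

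For $\gamma\ge2$ you can repair your own argument the same way. Set $\Phi(s)=\int_{\{G=s\}}|\nabla G|^2$, so that $\Phi'(s)=\int_{\{G=s\}}\nabla^2G(\boldsymbol n,\boldsymbol n)$. The inequality
\[
(\Phi')^2\le \Phi\int_{\{G=s\}}\frac{|\nabla^2G|^2}{|\nabla G|^2}
\]
combined with Stern's identity gives $ff''\ge 2\pi(\gamma-1)$ for $f=\sqrt{\Phi}$, in integrated form. Cheng--Yau and the constant flux $\int_{\{G=s\}}|\nabla G|$ give $\Phi(s)\le Cs\to 0$. So $f$ is convex with $f(0^+)=0$, which forces $f'\ge 0$ and $f\le Ms$. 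Then $f''\ge c/s$, which is not integrable at $0$, contradicting $f'(s_0)<\infty$.

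\textbf{Parabolic case and topology.} In the parabolic case your route works and is shorter than the paper's. With all large levels of genus at least $\gamma\ge2$ you get $\mB_1(b)\ge\mB_1(a)+4\pi(\gamma-1)(b-a)$, so $\mA_1$ grows quadratically. The paper instead assumes only genus at least $1$, shows $\mB_1\le0$ and $\mB_1\to0$, deduces torus levels, and then invokes Lemma~\ref{lem:handlebody-genus-boundary}.

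The step you flag as the main obstacle is immediate, and comparing $u$ with the logarithm of the distance is unnecessary. Cheng--Yau gives $|\nabla u|\le Cu$ on $\{u=r\}$ for large $r$, and $\int_{\{u=r\}}|\nabla u|$ is constant by Lemma~\ref{lem:A0-constant}. Hence $\mA_1(r)\le Cr$.

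Your topological step is exactly Lemma~\ref{lem:handlebody-genus-boundary}, which the paper cites. As sketched it is incomplete: items (i)--(iii) alone do not give genus at least $\gamma$. You must also rule out $\operatorname{rank}H_1(\Omega_t)$ exceeding the boundary genus. One way is Mayer--Vietoris with $H_2(M_\gamma)=0$, together with half-lives-half-dies for the complement.
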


To motivate our general strategy, we briefly review several related results. In the case of uniformly positive scalar curvature, J. Wang \cite{Wang2025} established a complete classification: manifolds admitting complete metrics of uniformly positive scalar curvature are connected sums of spherical space forms and \(\mathbb{S}^2 \times \mathbb{S}^1\). The key idea is to exhaust the manifold by domains whose boundaries are diffeomorphic to \(\mathbb{S}^2\).

In a related direction, J. Wang \cites{Wang2024-1,Wang2024-2} proved that if a contractible Riemannian \(3\)-manifold \((M,g)\) with \(R_g \ge 0\) admits an exhaustion by solid tori, then \(M \cong \mathbb{R}^3\). A common feature of both arguments is that the topology of the boundaries appearing in an exhaustion imposes strong topological restrictions on the ambient manifold, see Lemmas \ref{lem:sphere-exhaustion-R3}, \ref{lem:torus-exhaustion-R3}, and \ref{lem:handlebody-genus-boundary}. This viewpoint led Chodosh--Lai--Xu to study the exhaustion whose boundaries are \(\mathbb{S}^2\) or \(\mathbb{T}^2\), constructed via inverse mean curvature flow.

We pursue the same general strategy, but with a different technical tool. The domains used to construct the exhaustion no longer arise from inverse mean curvature flow, instead, they are given by level sets of certain positive proper harmonic functions. The underlying idea originates in the monotone quantities introduced in \cite{ColdingMonotonicity} for manifolds with nonnegative Ricci curvature.

More precisely, let \((M^n,g)\) be a nonparabolic \(n\)-manifold with \(\Ric_g \ge 0\), and let \(G(x)\) be the minimal Green's function with pole at \(p \in M\). Since in this case \(G(x)\) behaves like \(d_g(x,p)^{2-n}\) at all scales, the function
\[
b \defeq G^{\frac{1}{2-n}}
\]
may be regarded as a regularized distance function to \(p\). For \(\beta>0\), define
\[
A_\beta(r)=r^{1-n}\int_{\{b=r\}} |\nabla b|^{1+\beta}\,d\sigma.
\]
It was shown in \cite{ColdingMonotonicity} that \(A_\beta(r)\) is monotone in \(r\) whenever \(\Ric_g \ge 0\). In particular, \(A_0(r)=4\pi\) is a constant, and \(A_2(r)\) was used in \cite{CM14} to study the uniqueness of asymptotic cones of Ricci-flat manifolds.

For questions involving scalar curvature in dimension $n=3$, the relevant quantity is 
\[A_1(r)=r^{-2}\int_{\{b=r\}}|\nabla b|^2, \quad b=G^{-1}\] 
Colding--Minicozzi \cite{Colding-Minicozzi2025} related estimates for \(A_1(r)\) to nonnegative scalar curvature, refining an earlier monotonicity formula of Munteanu--Wang \cite{Munteanu--Wang2026}. We observe that, in the setting of \cite{Colding-Minicozzi2025}, if one assumes that the regular level sets \(\{b=r\}\) have positive genus for all sufficiently large \(r\), then the estimates for \(A_1\) can be strengthened further, leading to a contradiction. As a consequence, we obtain an exhaustion by level sets of \(b\) whose genus is zero.

Two issues remain. The first is the parabolic case, which must be treated separately. The second is more technical: one needs appropriate hypotheses under which the above argument can be carried out in the nonparabolic case. We begin with the latter issue.

A crucial prerequisite for the level-set methods in \cite{Munteanu--Wang2026} and \cite{Colding-Minicozzi2025} is the properness of the Green's function \(G\), equivalently, the condition that \(G(x)\to 0\) as \(x\to\infty\). Without properness, the positive level sets \(\{b=r\}\) may fail to be compact, and consequently \(A_\beta(r)\) may not even be well defined. The argument for the connectedness of \(\{b=r\}\) also relies on the compactness of these level sets, see \cite{MWcomparison}*{Proof of Lemma 2.3}.  In general, one only has
\[
\liminf_{x\to\infty} G(x)=0,
\]
which is insufficient for our purposes. 
We therefore impose the decay of \(G\) at infinity as assumption \eqref{eq:Green-vanish}.

\begin{remark}
There are several geometric conditions that guarantee the vanishing of the Green's function at infinity, namely \eqref{eq:Green-vanish}.
\begin{itemize}
    \item If \((M,g)\) has positive bottom of spectrum, that is, \(\lambda_1(M)>0\), has Ricci curvature lower bound \eqref{eq:Ricci-lower-bound} and is volume noncollapsed, i.e.
    \[
    \inf_{x\in M}\vol_g(B_1(x))>0,
    \]
    then \eqref{eq:Green-vanish} holds, see \cite{MWcomparison}*{Proof of Theorem 4.3}. However, $\lambda_1(M)>0$ is not necessary as $\lambda_1(\R^n)=0$ but its Green's function vanishes at infinity.
    
    \item If \((M,g)\) satisfies the lower Ricci curvature bound as in \eqref{eq:Ricci-lower-bound}, and the global Sobolev inequality with $\nu>2$,
    \[
    \left(\int_M f^{\frac{2\nu}{\nu-2}}\, dV\right)^{\frac{\nu-2}{2\nu}}
    \le C \left(\int_M |\nabla f|^2\, dV\right)^{\frac12},
    \qquad \forall f\in C_c^\infty(M).
    \]
    which is the case for minimal hypersurfaces in Euclidean \(\mathbb{R}^{n+1}\) for $\nu=n$, then \eqref{eq:Green-vanish} holds, see \cite{ChL}*{Proof of Property (5) in Proposition 2.1}. See also \cite{ChavelFeldman}*{Theorem 2}.   
\end{itemize}

\end{remark}

We now turn to the parabolic case. Our main new contribution is a treatment of the parabolic setting parallel to that of \cite{Colding-Minicozzi2025}. In \cite{YZ26}, the authors introduced a substitute for the quantity \(A_2(r)\) and proved uniqueness results for asymptotic limits in the Ricci-flat, linear volume growth setting, in parallel with \cite{CM14}. Since manifolds with nonnegative Ricci curvature and linear volume growth are parabolic, this provides the appropriate framework for the parabolic case. The monotone quantity considered in \cite{YZ26} is
\[
\mA_2(r)=\int_{\{v=r\}} |\nabla v|^3 \, d\sigma,
\]
where \(v\) is a proper harmonic function on the end that plays the role of a distance function.

From a geometric point of view, \(A_2(r)\) detects the cone structure in the blow-down limit, since \(A_2'(r)\) is essentially given by the \(L^2\)-norm of the traceless Hessian of \(b^2\), which in turn controls the Gromov--Hausdorff distance to a metric cone via the principle that volume cones imply metric cones \cite{CheegerColdingAlmostRigidity}. By contrast, \(\mA_2(r)\) detects splitting structure at infinity, since \(\mA_2'(r)\) is essentially given by the \(L^2\)-norm of the Hessian of \(u\), which controls the Gromov--Hausdorff distance to a splitting space \cite{CheegerColdingAlmostRigidity}.

From an analytic perspective, one is naturally led to expect that if \(\mA_1\) is defined in the parabolic case using the barrier function of a parabolic end in the same way that \(A_1\) is defined in the nonparabolic case, then \(\mA_1\) should serve as the appropriate analogue of \(A_1\). We define for the barrier function $u$
\[
\mA_1(r)=\int_{\{u=r\}} |\nabla u|^2 d\sigma
\]
We confirm this intuition and show that the same general argument outlined above for the nonparabolic case also applies in the parabolic setting. The main difference is that, in the parabolic case, we obtain an exhaustion whose boundaries are all diffeomorphic to \(\mathbb{T}^2\).

Finally, we refer the reader to \cite{Colding-Minicozzi2025}*{Appendix A}, and the references therein, for the differentiability of \(\mA_1\) and \(A_1\), as well as the continuity of the other level set integrals that arise in the argument.

The paper is organized as follows. In Section~2, we collect the topological preliminaries needed in the proof, including results on ends of contractible manifolds, connectedness of level sets of harmonic functions, and genus bounds for open handlebodies. In Section~3, we prove the main theorems by analyzing level set exhaustions arising from harmonic functions. We first treat the parabolic case, where the exhaustion is constructed from the barrier function on the end, and then turn to the nonparabolic case, where the exhaustion is given by the level sets of the minimal positive Green's function. In each case, the key step is to show that the relevant level sets have controlled topology, which then allows us to apply the topological lemmas from Section~2 to conclude the proofs of Theorems~A and~B.

\subsection*{Acknowledgements}
The authors thank Jian Wang, Guofang Wei and Nan Wu for their interest in this work. Z.Y. is supported by an AMS–Simons Travel Grant. X.Z. is supported by an AMS–Simons Travel Grant.



\section{Topological preliminaries}

In this section, we collect several topological preliminaries, which are drawn from \cites{Wang2024-1,Wang2024-2,Chodosh-Lai-Xu2025,MWcomparison,Munteanu--Wang2026}.

\begin{lemma}[\cite{Chodosh-Lai-Xu2025}*{Lemma~2.1}]\label{lem:one-end-contractible}
Assume $M$ is contractible and $\dim M \geq 2$. Then $M$ has only one end.
\end{lemma}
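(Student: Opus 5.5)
The plan is to argue by contradiction with compactly supported cohomology, or equivalently with a Mayer--Vietoris argument on a large compact set. Suppose $M$ is contractible with $n=\dim M\ge 2$ and has at least two ends. Then there is a compact set $K\subset M$ such that $M\setminus K$ has at least two unbounded components. After enlarging $K$ we may take it to be a compact connected smooth $n$-dimensional submanifold with boundary. We may also assume that every component of $M\setminus K$ is unbounded: we simply absorb the bounded components into $K$, which keeps $K$ compact and connected.

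The key algebraic input is the long exact sequence for cohomology with compact supports. Since $M$ is noncompact and contractible, $H^0_c(M)=0$. Since $M$ is an orientable (contractible) $n$-manifold, Poincaré duality gives $H^1_c(M)\cong H_{n-1}(M)$. This vanishes when $n\ge 2$, because $n-1\ge 1$ and $M$ is acyclic. For $n=2$ we get $H^1_c(M)\cong H_1(M)=0$ as well, so the argument is uniform in dimension. In general, the number of ends $e(M)$ satisfies $\operatorname{rank} H^1_c(M)\ge e(M)-1$ whenever $H^0(M)\to H^0_{\text{ends}}$ is considered.

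Concretely, I would use the exact sequence
\[
0\to H^0_c(M)\to H^0(M)\to \varinjlim_K H^0(M\setminus K)\to H^1_c(M).
\]
This comes from the direct limit over compact $K$ of the long exact sequences of the pairs $(M,M\setminus K)$, using $H^*_c(M)=\varinjlim_K H^*(M,M\setminus K)$. The direct limit $\varinjlim_K H^0(M\setminus K)$ is the space of locally constant functions on the ends, and its dimension is the number of ends. The image of $H^0(M)\cong\mathbb{R}$ in it is one-dimensional, namely the constants. Since $H^1_c(M)=0$, the map $H^0(M)\to \varinjlim H^0(M\setminus K)$ is surjective. Hence the number of ends is at most $1$. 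Because $M$ is noncompact, it has exactly one end.

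The main point to check carefully is the identification $H^1_c(M)\cong H_{n-1}(M)=0$, which relies on Poincaré duality and orientability. A contractible manifold is simply connected, hence orientable, so this holds. An alternative that avoids duality is to use $\mathbb{Z}/2$ coefficients throughout, where orientability is not needed at all.

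If one prefers a more hands-on argument, we can instead take $K$ as above and separate the components: $M\setminus \operatorname{int}K = U_1\sqcup U_2$, with each $U_i$ nonempty, closed, and unbounded. A Mayer--Vietoris sequence for $M=K\cup(U_1\sqcup U_2)$, run on compactly supported cohomology, produces a nonzero class in $H^1_c(M)$. It is represented by the differential of a function that equals $1$ far out in $U_1$ and $0$ far out in $U_2$. This contradicts $H^1_c(M)=0$. Either route gives the lemma.
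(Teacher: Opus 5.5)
Your proof is correct. The paper gives no argument for this lemma; it is quoted from Chodosh--Lai--Xu, so there is no in-text proof to compare against. What you give is the standard self-contained proof. Poincar\'e duality yields $H^1_c(M)\cong H_{n-1}(M)=0$ for $n\ge 2$. The direct limit over compact $K$ of the long exact sequences of the pairs $(M,M\setminus K)$ then forces $H^0(M)\to\varinjlim_K H^0(M\setminus K)$ to be surjective. That is impossible once $M\setminus K$ has two unbounded components $U_1,U_2$.

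A few points are worth tightening.

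\emph{Counting ends.} Do not claim the limit has dimension equal to the number of ends; this is not literally meaningful when there are infinitely many ends. Instead, check directly that the germ of $\chi_{U_1}$ is not a constant germ. If $\chi_{U_1}-c$ vanished on $M\setminus K'$ for some compact $K'$, evaluate at points of $U_1\setminus K'$ and $U_2\setminus K'$. Both sets are nonempty because $U_1$ and $U_2$ are unbounded, so you would get $c=1$ and $c=0$.

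\emph{The hands-on version.} The same observation is exactly why your class $[df]\in H^1_c(M)$ is nonzero, and you should say so there. A compactly supported $h$ with $dh=df$ would make $f-h$ constant, hence $f$ constant off a compact set.

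\emph{Two wording slips.} Passing to $\mathbb{Z}/2$ coefficients removes the need for orientability, not for duality. Also, noncompactness need not be assumed: a closed $n$-manifold has $H_n(M;\mathbb{Z}/2)\neq 0$, so it is not contractible.

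\emph{What your route buys.} It shows that the number of ends is at most $1+\dim H_{n-1}(M;\mathbb{Z}/2)$, so only the vanishing of $H_{n-1}(M;\mathbb{Z}/2)$ is used. In particular it shows directly that the open handlebody $M_\gamma$, which has $H_2=0$, has one end. The paper's remark instead derives this from the connectedness of the handlebody's boundary.
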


\begin{lemma}\label{lem:contractible-orientable}
If $M$ is contractible, then $M$ is orientable.
\end{lemma}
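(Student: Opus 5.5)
The plan is to use the orientation double cover. For a connected manifold $M$ there is a two-sheeted covering $\widetilde M \to M$ whose points are pairs $(x,o_x)$, with $x \in M$ and $o_x$ a local orientation of $T_xM$ (equivalently, a generator of $H_n(M, M\setminus\{x\};\mathbb{Z})$). A standard fact is that $M$ is orientable exactly when this cover is disconnected, i.e.\ $\widetilde M \cong M \sqcup M$, which happens exactly when the cover admits a continuous section. Equivalently, the orientation character $w_1 \colon \pi_1(M) \to \mathbb{Z}/2$ is trivial.

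Since $M$ is contractible, it is in particular connected and simply connected, so $\pi_1(M)=0$. Take any basepoint $x_0$ and either choice of local orientation $o_{x_0}$. Path lifting in the double cover gives, for each $x$, a local orientation $o_x$ by transporting $o_{x_0}$ along a path from $x_0$ to $x$. Any two such paths are homotopic rel endpoints because $\pi_1(M)=0$, so by the homotopy lifting property the two lifts have the same endpoint. Thus $o_x$ is well defined. Continuity of $x\mapsto o_x$ follows from the local triviality of the covering, and this gives a global section, i.e.\ an orientation of $M$.

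An alternative route is cohomological: $w_1(M) \in H^1(M;\mathbb{Z}/2) \cong \operatorname{Hom}(\pi_1(M),\mathbb{Z}/2) = 0$, so $w_1$ vanishes and $M$ is orientable. Either way there is no real obstacle. The only point needing care is the standard identification of orientability with triviality of the orientation cover, or with the vanishing of $w_1$, which I would quote rather than reprove. Only simple connectivity is used, not full contractibility.
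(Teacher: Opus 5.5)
Your proposal is correct, and your cohomological ``alternative route'' is exactly the paper's proof. The paper observes that $H^1(M;\mathbb{Z}_2)=0$ for contractible $M$, hence $w_1(M)=0$, hence $M$ is orientable.

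Your main route via the orientation double cover detects the same obstruction geometrically. The orientation cover is the two-sheeted cover classified by the orientation character $\pi_1(M)\to\mathbb{Z}/2$, and the class of that character in $H^1(M;\mathbb{Z}/2)$ is $w_1$. What your version buys is self-containment: it uses only path and homotopy lifting rather than characteristic classes. The paper's one-liner is shorter and slightly more general.

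Your closing remark that only simple connectivity is used can be sharpened further. The argument needs only $H^1(M;\mathbb{Z}_2)\cong\operatorname{Hom}(\pi_1(M),\mathbb{Z}/2)=0$, i.e.\ that $\pi_1(M)$ has no index-two subgroup, which is weaker than $\pi_1(M)=0$. Your covering argument also works under this weaker hypothesis, since a connected orientation cover would be a connected two-sheeted cover and hence would produce such a subgroup.
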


\begin{proof}
Since $M$ is contractible, it is homotopy equivalent to a point. Hence
$H^1(M;\mathbb Z_2)=0.$ Then the first Stiefel--Whitney class $w_1(M)=0$.
Therefore, $M$ is orientable.
\end{proof}


In Lemma \ref{lem:sphere-exhaustion-R3} and \ref{lem:torus-exhaustion-R3} below, we suppose that $M$ is a contractible $3$-manifold, and that
\[
\Omega_1 \Subset \Omega_2 \Subset \cdots \Subset M,
\qquad
\bigcup_{i=1}^{\infty}\Omega_i = M,
\]
is an exhaustion, with each $\partial\Omega_i$ smooth and connected.

\begin{lemma}[\cite{Husch-Price1970}, cf.\cite{Chodosh-Lai-Xu2025}*{Lemma~2.3}]\label{lem:sphere-exhaustion-R3}
If $\partial\Omega_i \cong\mb S^2$ for all $i = 1,2,\dots$, then $M \cong \mathbb{R}^3$.
\end{lemma}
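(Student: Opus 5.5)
The plan is to show that each $\Omega_i$ is a closed $3$-ball, and then that $M$ is a nested union of balls, each lying in the interior of the next.

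\textbf{Step 1: each $\Omega_i$ is a ball.} Since $M$ is contractible it is orientable by Lemma~\ref{lem:contractible-orientable}, and it is irreducible in a weak sense: $\pi_2(M)=0$. Fix $i$. Since $\partial\Omega_i\cong\mathbb S^2$ is embedded in $M\cong$ (a subset of) a contractible manifold, we have $H_2(M)=0$. So $\partial\Omega_i$ separates $M$ into $\Omega_i$ and a noncompact complement. I would then argue that the compact side $\Omega_i$ is a homotopy ball. A Mayer--Vietoris computation for $M=\Omega_i\cup(M\setminus\mathrm{int}\,\Omega_i)$ glued along $\mathbb S^2$ gives $\tilde H_*(\Omega_i)=0$. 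For $\pi_1$, van Kampen gives $\pi_1(M)=\pi_1(\Omega_i)*\pi_1(M\setminus\mathrm{int}\,\Omega_i)$, because $\pi_1(\mathbb S^2)=1$. Hence $\pi_1(\Omega_i)=1$. A compact simply connected $3$-manifold with boundary $\mathbb S^2$ becomes, after capping off with a ball, a closed simply connected $3$-manifold. By Perelman's resolution of the Poincar\'e conjecture this is $\mathbb S^3$, so $\Omega_i$ is a $3$-ball by Alexander's theorem. This step avoids the older Husch--Price argument, which dealt with possible fake cells.

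\textbf{Step 2: the shells are products.} For $i<j$, the region $\overline{\Omega_j}\setminus\mathrm{int}\,\Omega_i$ is a ball with the interior of a smoothly embedded ball removed, where $\Omega_i\Subset\Omega_j$. By Alexander's theorem (the Sch\"onflies theorem in dimension 3, smooth category), $\partial\Omega_i$ bounds a ball in $\Omega_j$ on both sides appropriately. Hence this region is diffeomorphic to $\mathbb S^2\times[0,1]$. One can also see this via the annulus theorem, which follows from Alexander and the disc theorem.

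\textbf{Step 3: assemble.} Passing to a subsequence is unnecessary. Inductively, I will choose diffeomorphisms $\phi_i\colon\overline{\Omega_i}\to \overline{B_i(0)}\subset\mathbb R^3$ so that $\phi_{i+1}$ extends $\phi_i$. Given $\phi_i$, the shell $\overline{\Omega_{i+1}}\setminus\mathrm{int}\,\Omega_i\cong\mathbb S^2\times[i,i+1]$. I extend $\phi_i|_{\partial\Omega_i}$ across the product collar to the shell $\{i\le|x|\le i+1\}$. Smoothness across $\partial\Omega_i$ is arranged by using collar neighborhoods, i.e.\ the uniqueness of collars up to isotopy. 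The union of the $\phi_i$ is then a diffeomorphism $M\to\mathbb R^3$.

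The main obstacle is Step 1: ruling out that $\Omega_i$ is a fake ball. This was the original content of the Husch--Price theorem, and it is now handled by the Poincar\'e conjecture. The smoothing of the glued maps in Step 3 is routine.
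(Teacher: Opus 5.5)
Your proof is correct, but it follows a different route from the one the paper relies on. The paper gives no argument of its own; it defers to Husch--Price through Chodosh--Lai--Xu. There, the exhaustion is used only once. The same van Kampen splitting across $\partial\Omega_i\cong\mathbb S^2$ that you use shows that $M\setminus\mathrm{int}\,\Omega_i$ is simply connected, so $M$ is simply connected at infinity. The Husch--Price theory of ends then identifies a contractible open $3$-manifold that is simply connected at infinity and contains no fake $3$-cell as $\mathbb R^3$, and Perelman removes the fake-cell hypothesis.

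You instead work directly with the exhaustion. You apply Perelman and Alexander to each $\overline{\Omega_i}$ to see that it is a genuine ball. You use the Palais--Cerf disc theorem to show that each shell $\overline{\Omega_{i+1}}\setminus\mathrm{int}\,\Omega_i$ is $\mathbb S^2\times[0,1]$, and then you glue by hand.

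Your route is more elementary and produces a diffeomorphism directly, with no appeal to Moise to upgrade a homeomorphism. The cited route uses a heavier black box but needs less input: only simple connectivity at infinity, not an exhaustion by spheres. One correction to your commentary: Husch--Price does not rule out fake cells, it assumes irreducibility. So Perelman is indispensable on either route. Without it, a fake ball with $\mathbb S^2\times[0,\infty)$ attached along its boundary would satisfy all the hypotheses of the lemma.

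A few minor points about your write-up:
\begin{enumerate}
\item Van Kampen needs both $\overline{\Omega_i}$ and $M\setminus\mathrm{int}\,\Omega_i$ to be connected. This follows from the connectedness of $M$ and of $\partial\Omega_i$, and you should say so.
\item The Mayer--Vietoris step is superfluous. As written, it would also need the fact that $\partial\Omega_i$ is null-homologous in $\Omega_i$ to conclude $H_2(\Omega_i)=0$.
\item In Step 2, the claim that $\partial\Omega_i$ bounds a ball ``on both sides'' inside $\Omega_j$ is not right as stated, since the outer side is the shell. The correct justification is the disc theorem, which isotopes $\overline{\Omega_i}$ within $\mathrm{int}\,\Omega_j$ to a round concentric ball.
\item In Step 3, no information about $\mathrm{Diff}(\mathbb S^2)$ is needed. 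The boundary mismatch $f$ extends over the product shell simply as $f\times\mathrm{id}$.
\end{enumerate}
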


The following result is drawn from \cite{Chodosh-Lai-Xu2025}*{Lemma 2.4} and is essentially due to J.~Wang \cite{Wang2024-1}*{Corollary~1.3}, see also \cite{Wang2024-2}.

\begin{lemma}[\cite{Chodosh-Lai-Xu2025}*{Lemma 2.4}]\label{lem:torus-exhaustion-R3}
If $\partial\Omega_i \cong \mb T^2$ for all $i = 1,2,\dots$, and $M$ admits a complete metric of nonnegative scalar curvature, then $M \cong \mathbb{R}^3$.
\end{lemma}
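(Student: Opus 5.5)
The plan is to reduce to J.~Wang's theorem for contractible manifolds exhausted by solid tori. Since every $\partial\Omega_i$ is an embedded torus in the contractible $3$-manifold $M$, it bounds a solid torus. We show $\Omega_i$ is that solid torus, possibly after modifying the exhaustion, and then apply \cite{Wang2024-1}*{Corollary~1.3}.

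\emph{Step 1: each $\Omega_i$ has a solid torus as its only possible bounded piece.} By Lemma~\ref{lem:contractible-orientable}, $M$ is orientable, so $\partial\Omega_i$ is a two-sided torus. Since $H_1(M)=0$, the Mayer--Vietoris sequence (or the half-lives-half-dies principle for the compact orientable $3$-manifold $\overline{\Omega_i}$) shows that the map $\pi_1(T^2)\to\pi_1(M)$ is not injective. Irreducibility of $M$ is needed to apply the loop theorem. A contractible $3$-manifold is irreducible by the Poincaré conjecture: an embedded $\mathbb S^2$ separates, because $H_2(M)=0$ forces $H_1=0$ arguments, and the compact side is a simply connected homotopy ball, hence a ball. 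So the loop theorem gives a compressing disk $D$ for $\partial\Omega_i$. Compressing the torus along $D$ yields a sphere, which bounds a ball $B$. Hence the torus bounds either a solid torus, when $D$ lies on the bounded side, or a knot exterior inside a ball, when $D$ lies on the unbounded side. In the second case, the bounded side $\Omega_i$ is the complement in a ball of a neighbourhood of a knotted arc together with its outside, and this need not be a solid torus. The key observation is then to pass to the modified domain $\Omega_i' = \Omega_i\cup(\text{ball piece})$ or $\Omega_i'=$ the solid torus side. Either way, we get compact domains with toroidal or spherical boundary.

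\emph{Step 2: build a nested exhaustion by solid tori or balls.} For each $i$, let $V_i$ be the solid torus bounded by $\partial\Omega_i$ when $D\subset\overline{\Omega_i}$. Otherwise, $\overline{\Omega_i}$ lies in the ball $B_i$ obtained by capping, and we replace $\Omega_i$ by $B_i$. Passing to a subsequence with $\overline{V_i}\subset V_{i+1}$ is possible since the original sets exhaust $M$ and each replacement is a compact set containing $\Omega_i$. If infinitely many replacements are balls, Lemma~\ref{lem:sphere-exhaustion-R3} gives $M\cong\mathbb R^3$. Otherwise, a tail of the exhaustion consists of solid tori. Wang's result then applies: a contractible $3$-manifold with complete $R_g\ge 0$ exhausted by solid tori is $\mathbb R^3$. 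His argument excludes Whitehead-type genus-one manifolds using the minimal surface / $\mu$-bubble obstruction.

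The main obstacle is Step~1. Specifically, we must ensure that the replacement domains stay compact and nested, and handle the case where the compressing disk lies outside $\Omega_i$ but the resulting sphere does not bound a ball containing $\Omega_i$. I would resolve this with the classification of tori in irreducible manifolds: such a torus bounds a solid torus or lies in a ball, as in \cite{Chodosh-Lai-Xu2025}*{Lemma~2.4}. Then, if needed, I would choose the larger compact region at each stage.
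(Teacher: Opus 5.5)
Your argument is correct and is essentially the reduction the paper relies on. The paper gives no proof of its own: it cites Chodosh--Lai--Xu's Lemma~2.4 and Wang's Corollary~1.3, and your route is the same. Since $\pi_1(M)=1$, van Kampen plus the loop theorem give a compressing disk $D$ on one side of $\partial\Omega_i$; irreducibility makes the compressed sphere bound a ball, so $\overline{\Omega_i}$ is either a solid torus or lies in a ball; and one concludes by Lemma~\ref{lem:sphere-exhaustion-R3} or by Wang's solid-torus result.

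Small corrections: irreducibility is needed for the compressed sphere to bound a ball, not for the loop theorem. The case you worry about at the end cannot occur: when $D$ lies outside $\Omega_i$, the compressed sphere bounds the compact region $\overline{\Omega_i}\cup N(D)$, which must therefore be the ball. Your opening claim that every such torus bounds a solid torus is false in general, but you never actually use it.
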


The following lemma concerns the connectedness of level sets of positive harmonic functions. We state it in a form adapted to our setting.

\begin{lemma}[\cite{Munteanu--Wang2026}*{Lemma 2.2}, \cite{MWcomparison}*{Lemma 2.3}]\label{lem:levelset-connected}
    Let \((M,g)\) be a complete manifold with one end and finite first Betti number. Then the following hold:
    \begin{itemize}
        \item If \(u\) is a proper harmonic function on the end \(E\), then every level set of \(u\) is connected.
        \item If \(G\) is the minimal Green's function harmonic function defined on \(M\setminus\{p\}\) and vanishes at infinity \eqref{eq:Green-vanish}, then there exists \(t_0>0\) such that the level sets \(\{G=t\}\) are connected for all \(t\le t_0\). If, in addition, \(M\) has vanishing first Betti number, then every positive level set of \(G\) is connected.
    \end{itemize}
\end{lemma}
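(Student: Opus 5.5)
The plan is to argue by contradiction in both cases, using the maximum principle together with the assumption of one end and finite first Betti number to show that a level set cannot split into several components.

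\emph{Parabolic/end case.} Let $u$ be a proper harmonic function on the end $E$; after normalizing, we may take $u=0$ on $\partial E$ and $u\to\infty$ at infinity. Properness makes every level set $\{u=t\}$ compact. Suppose $\{u=t\}$ has at least two components $\Sigma_1,\Sigma_2$.

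First consider the superlevel set $\{u>t\}$. Each component of $\{u>t\}$ is unbounded: a bounded component would be a compact domain on which $u$ is harmonic and equal to $t$ on the boundary, forcing $u\equiv t$ there, which is impossible. Since $\{u\le t\}$ is compact and $M$ has one end, $\{u>t\}$ has exactly one unbounded component, so it is connected.

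The sublevel set $\{u<t\}$ is connected for the same reason. Each of its components must meet $\partial E$, since otherwise the minimum principle forces $u\equiv t$ on that component. Assuming $\partial E$ is connected (arranged by choosing $E$ suitably), $\{u<t\}$ is connected.

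Now both sides of the level set are connected, so we can build a loop that crosses $\Sigma_1$ exactly once: go from $\Sigma_1$ to $\Sigma_2$ through $\{u>t\}$, and return through $\{u<t\}$. Its intersection number with the closed hypersurface $\Sigma_1$ is nonzero. For a genuine contradiction this needs more than finite $b_1$; I expect to use the flux instead. The flux $\int_{\Sigma}\partial_\nu u$ is the same for every regular level set, and one compares the fluxes through the individual $\Sigma_i$.

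The cleaner route, following Munteanu--Wang, uses finiteness of $b_1$ as follows. The components $\Sigma_i$ represent classes in $H_2$ of the end. Each such class is nontrivial, because the integral of $*du$ over it is positive: $\partial_\nu u>0$ on a regular level set, and sign considerations make each component's flux positive. The loop just constructed also shows these classes are independent.

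Repeating at infinitely many levels does not by itself contradict $\dim H_2<\infty$. Instead, I will show that the number of components is nonincreasing in $t$, and that for $t$ large it equals the number of ends, namely one. The cleanest form of the argument: if $\{u=t\}$ had $k\ge2$ components, then $\{u>t\}$ would be connected while the $k$ boundary pieces give $k-1$ independent classes in $H_1(M)$ through the loop construction. These persist for all larger levels, so they contradict finite $b_1$ only if the count is unbounded. Therefore I use finite $b_1$ to bound the number of components uniformly by $b_1+1$, and then argue directly from one end and the flux that the count must be one.

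\emph{Green's function case.} The same scheme applies with $\{G>t\}$ and $\{G<t\}$. Components of $\{G<t\}$ are unbounded by the maximum principle together with $G\to0$, and one end makes $\{G<t\}$ connected. Components of $\{G>t\}$ must contain the pole $p$, so $\{G>t\}$ is connected. Any extra component of $\{G=t\}$ then produces a loop dual to it and hence a nonzero class in $H_1$. When $b_1=0$ this is immediate for every $t$.

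For finite $b_1$, the loops can be chosen inside the compact set $\{G\ge t\}$, and they would have to lie in the region near infinity. Because the classes come from disjoint nested shells, there are only finitely many levels at which extra components occur, which yields $t_0$.

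The main obstacle is making rigorous the link between multiple components and independent homology classes, including handling critical levels; there I would pass to nearby regular values using Sard's theorem.
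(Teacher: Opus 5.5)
\textbf{There is a genuine gap: the way you use $b_1<\infty$ does not work.}

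Your preliminary steps are sound. By the maximum principle and one-endedness, $\{u>t\}$ and $\{u<t\}$ are connected (resp.\ $\{G>t\}$, all of whose components contain $p$, and $\{G<t\}$). So a regular level with two components $\Sigma_1,\Sigma_2$ yields a loop $\sigma$ crossing $\Sigma_1$ transversally exactly once, hence of infinite order in $H_1(M)$. This settles the case $b_1(M)=0$ of both bullets. (The paper gives no proof of its own; it quotes Munteanu--Wang.)

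The flux carries no information. Each component has flux $\int_{\Sigma_i}|\nabla u|>0$, and these fluxes simply add up to the total. Also, $[\Sigma_i]\neq0$ in $H_2(E)$ is automatic and harmless, since $[\Sigma_1]+[\Sigma_2]=[\partial E]$. The number of components is not monotone in $t$, the classes you produce need not persist to higher levels, and ``one end plus flux'' cannot force a single component.

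In fact, for an arbitrary choice of the end the first bullet fails. Let $M=S\times S^1$, where $S$ is the flat torus $\mathbb{R}^2/(\mathbb{Z}\times L\mathbb{Z})$ punctured at $(\tfrac12,\tfrac L2)$ and given a cylindrical end by a conformal change. Let $E$ be the complement of $D\times S^1$ for a small disk $D$ centered at the origin. Consider the $S^1$-invariant proper harmonic function on $E$ vanishing on $\partial E$. In the $S$-factor it has exactly two saddles, at $(\tfrac12,0)$ and $(0,\tfrac L2)$; these are forced by the two reflection symmetries and $\chi=-2$. Once $L$ is large they lie at different heights, and every level strictly between them is a pair of disjoint tori. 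Yet $M$ has one end and $b_1=3$.

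Your finite-$b_1$ sketch for $G$ has the same missing mechanism, plus two further problems:
\begin{itemize}
\item The loop cannot lie in $\{G\ge t\}$, since it must return through $\{G<t\}$.
\item ``Finitely many bad levels'' is the wrong target, since bad levels typically fill intervals. What is needed is that they stay away from $t=0$.
\end{itemize}

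\textbf{The missing idea} is to localize the homology of $M$ in a compact set and test against a dual $1$-form.
\begin{itemize}
\item Since $b_1(M)<\infty$, choose a compact set $K$ (containing $p$ in the Green's case) carrying closed curves $\gamma_1,\dots,\gamma_k$ whose classes span $H_1(M;\mathbb{R})$.
\item Choose the end $E$ with $\partial E$ connected, as you do, but also with $K\subset M\setminus E$.
\item In the Green's case, take $t_0<\inf_{K\setminus\{p\}}G$.
\end{itemize}
Suppose a level $\{u=t\}$ with $t>0$, or $\{G=t\}$ with $t\le t_0$, has components $\Sigma_1,\Sigma_2$. On a thin collar $\Sigma_1\times(-\epsilon,\epsilon)$ disjoint from $K$, let $\omega=\eta(s)\,ds$ with $\eta\in C^\infty_c((-\epsilon,\epsilon))$ and $\int\eta=1$, and extend $\omega$ by $0$. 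Then $\omega$ is closed and $\int_{\gamma_i}\omega=0$ for all $i$. By de Rham's theorem $\omega$ is exact, which contradicts $\int_\sigma\omega=\pm1$ for your loop.

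This gives the first bullet for such an $E$, and the second with this $t_0$. When $b_1=0$ no curves are needed, so there is no restriction on $E$ or $t$. Critical levels then follow from nearby regular ones, since $u-t$ changes sign near every point of $\{u=t\}$.

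For the paper's applications the issue is harmless. In the contractible case $b_1=0$. For a handlebody $H_2(M_\gamma)=0$, so no compact surface there can be crossed exactly once by a closed loop.
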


\begin{remark}
A contractible manifold clearly has vanishing first Betti number, and by Lemma \ref{lem:one-end-contractible} it has only one end. An open handlebody of genus \(\gamma\) has first Betti number \(\gamma<\infty\), and its interior has only one end since its boundary is connected. Thus, Lemma \ref{lem:levelset-connected} applies in both settings of Theorems \ref{Thm:Diffeo-to-R3} and \ref{genus-at-most-1}.
\end{remark}

Finally, we record the lemma we will use to control the genus of a handlebody.

\begin{lemma}[\cite{Chodosh-Lai-Xu2025}*{Lemma~2.7}]\label{lem:handlebody-genus-boundary}
Let $M^3$ be an open handlebody. Let
\[
\Omega_1 \Subset \Omega_2 \Subset \cdots \Subset M
\]
be a $C^1$ exhaustion of $M$, such that each $\Omega_i$ and $M\setminus \Omega_i$ is connected. Then
\[
\operatorname{genus}(\partial\Omega_i)\geq \operatorname{genus}(M)
\]
for all sufficiently large $i$.
\end{lemma}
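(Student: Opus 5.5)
The plan is a topological argument about how closed surfaces inside a handlebody separate it, rather than anything analytic. Write $M=\operatorname{int}H$, where $H$ is a compact handlebody of genus $\gamma=\operatorname{genus}(M)$, and set $\Sigma_i=\partial\Omega_i$. Since $\Omega_i$ and $M\setminus\Omega_i$ are both connected, $\Sigma_i$ is a closed, separating, two-sided surface. I would first check that $\Sigma_i$ is connected: if it had several components, a Mayer--Vietoris count would contradict the connectedness of the two sides, because $M$ is connected and $H_1(M)$ is free. Hence $\Sigma_i$ is a single closed orientable surface; $M$ is orientable because a handlebody is. For $i$ large, $\Omega_i$ contains a fixed compact core $K\subset M$. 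I take $K$ to be a regular neighborhood of a wedge of $\gamma$ circles, so that $M$ deformation retracts onto $K$ and $M\setminus K\cong \partial H\times[0,1)$.

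The key step is to show that the inclusion $\Sigma_i\hookrightarrow \overline{M\setminus\Omega_i}$ induces a surjection on $H_1(\,\cdot\,;\mathbb Q)$ onto a space of dimension at least $\gamma$. I would extract this from Mayer--Vietoris for the decomposition
\[
M=\overline{\Omega_i}\cup \overline{M\setminus\Omega_i},
\]
whose two pieces meet in $\Sigma_i$.

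To get the dimension count, let $W=\overline{M\setminus\Omega_i}$. Because $\Omega_i\supset K$, the set $W$ lies in the collar $\partial H\times[0,1)$ and contains a neighborhood of infinity $\partial H\times[1-\epsilon,1)$. The inclusion of the end $\partial H\times\{t\}$ into $M$ is surjective on $H_1$, since $\partial H\to H$ is surjective on $H_1$. This inclusion factors through $W$, so $H_1(W)\to H_1(M)\cong\mathbb Q^\gamma$ is surjective.

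I would then use the standard half-lives-half-dies argument applied to the compact $3$-manifold $\overline{\Omega_i}$, whose boundary is $\Sigma_i$. It gives
\[
\dim\ker\bigl(H_1(\Sigma_i)\to H_1(\overline{\Omega_i})\bigr)=g_i=\operatorname{genus}\Sigma_i .
\]
Now combine this with the exact Mayer--Vietoris sequence
\[
H_1(\Sigma_i)\to H_1(\overline{\Omega_i})\oplus H_1(W)\to H_1(M)\to \widetilde H_0(\Sigma_i)=0 .
\]
Since $K\subset\Omega_i$ and $K\hookrightarrow M$ is a homotopy equivalence, the map $H_1(\overline{\Omega_i})\to H_1(M)$ is surjective. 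Taking the part of $H_1(\Sigma_i)$ lying in the kernel of the first component, and using surjectivity of $H_1(W)\to H_1(M)$, I want to conclude $g_i\ge\gamma$.

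A cleaner way to organize the same reasoning is to take a compact piece $W'=W\cap(\partial H\times[0,1-\epsilon])$, which has boundary $\Sigma_i\sqcup\partial H_\epsilon$. Apply the half-lives-half-dies inequality to the compact manifold $\overline{\Omega_i}\cup W'\cong H$, together with the lens decomposition. Alternatively, compose maps and count ranks: $H_1(\partial H)\to H_1(W')\to H_1(M)$ has rank $\gamma$. The image of $H_1(\Sigma_i)$ in $H_1(W')$ must then have rank at least $\gamma$. This follows from Mayer--Vietoris on $M=\overline{\Omega_i}\cup W$, using $H_1(\overline{\Omega_i})\to H_1(M)$ and an Alexander-duality count within the collar $\partial H\times\mathbb R$. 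The collar count gives $H_1(W')=H_1(\partial H)\oplus(\text{contribution of }\Sigma_i)$, and rank of $H_1(\Sigma_i)$ is $2g_i\ge\dim\operatorname{im}$. That yields the weaker bound $2g_i\ge\gamma$. To sharpen it to $g_i\ge\gamma$, I would use the intersection pairing: the image of $H_1(\Sigma_i)\to H_1(M)$ is isotropic, while the kernel has dimension $g_i$ and is Lagrangian.

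The main obstacle is exactly this final sharpening from $2g_i\ge\gamma$ to $g_i\ge\gamma$. I would try Lagrangian duality on both sides. The kernels $L_1=\ker(H_1\Sigma_i\to H_1\overline{\Omega_i})$ and $L_2=\ker(H_1\Sigma_i\to H_1 W')$ are each Lagrangian. Since $\Sigma_i$ separates the handlebody $\overline{\Omega_i}\cup W'$, whose $H_1$ injects into $H_1(M)$, Mayer--Vietoris gives $H_1(\Sigma_i)/(L_1+L_2)$ mapping onto a space of dimension at least $\gamma$. Then $2g_i-\dim(L_1+L_2)\ge\gamma$ with $\dim(L_1+L_2)\ge g_i$ would again give only $g_i\ge\gamma$ if $L_1\cap L_2=0$. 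So the crux is showing $L_1\cap L_2$ is small, which I would attempt via Mayer--Vietoris injectivity, citing the argument of \cite{Chodosh-Lai-Xu2025} if needed.
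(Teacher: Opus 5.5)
\textbf{There is a genuine gap.} Write $g_i=\operatorname{genus}(\Sigma_i)$ and $N=\overline{\Omega_i}\cup W'$, a handlebody of genus $\gamma$ with $H_2(N)=0$. Your proposal never reaches $g_i\ge\gamma$. The only inequality you actually derive is $2g_i\ge\gamma$, and the sharpening rests on one false claim and one unproved claim.

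\emph{The false claim} is that $L_2=\ker\bigl(H_1(\Sigma_i)\to H_1(W')\bigr)$ is Lagrangian. Half-lives-half-dies for $W'$ concerns its whole boundary $\Sigma_i\sqcup\partial H_\epsilon$, not $\Sigma_i$ alone. In fact $\dim L_2=g_i-\gamma$. For example, $L_2=0$ when $\overline{\Omega_i}$ is a regular neighborhood of $K$, since then $W'\cong\Sigma_i\times[0,1]$. If $L_1,L_2$ were transverse Lagrangians, you would get $L_1+L_2=H_1(\Sigma_i)$, contradicting your own assertion that $H_1(\Sigma_i)/(L_1+L_2)$ maps onto a space of dimension at least $\gamma$.

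Nor is $L_1\cap L_2$ the crux. It is the image of $H_2(N)=0$ in Mayer--Vietoris, so it vanishes automatically. In any case, $2g_i-\dim(L_1+L_2)\ge\gamma$ together with the trivial bound $\dim(L_1+L_2)\ge\dim L_1=g_i$ already gives $g_i\ge\gamma$.

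\emph{The unproved claim} is the one behind that inequality: that $H_1(\Sigma_i;\mathbb{Q})\to H_1(N;\mathbb{Q})\cong H_1(M;\mathbb{Q})$ is surjective. Mayer--Vietoris exactness only gives surjectivity of $H_1(\overline{\Omega_i})\oplus H_1(W')\to H_1(N)$. Surjectivity of $H_1(W)\to H_1(M)$, which you do prove, says nothing about classes carried by $\Sigma_i$. This surjectivity is essentially the whole lemma. Once you have it, the map factors through $H_1(\overline{\Omega_i})$, where the image of $H_1(\Sigma_i)$ has dimension $2g_i-\dim L_1=g_i$, so $\gamma\le g_i$. Separately, the image of $H_1(\Sigma_i)$ in $H_1(M)$ cannot be ``isotropic'', since $H_1$ of a $3$-manifold carries no intersection form.

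\textbf{The missing idea} is to apply half-lives-half-dies to $W'$ with its full boundary, of genus $g_i+\gamma$. This gives $\dim H_1(W';\mathbb{Q})\ge g_i+\gamma$, instead of the bound $\ge\gamma$ you extract from the end. Since $H_2(N)=0$ and $\Sigma_i$ is connected, Mayer--Vietoris is short exact:
\[
0\to H_1(\Sigma_i)\to H_1(\overline{\Omega_i})\oplus H_1(W')\to H_1(N)\to 0 .
\]
Hence $\dim H_1(\overline{\Omega_i})+\dim H_1(W')=2g_i+\gamma$. Combined with $\dim H_1(\overline{\Omega_i})\ge g_i$ and $\dim H_1(W')\ge g_i+\gamma$, both bounds are equalities, so $\dim H_1(\overline{\Omega_i};\mathbb{Q})=g_i$. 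Because $K\subset\Omega_i$ and $K\hookrightarrow M$ is a homotopy equivalence, $H_1(\overline{\Omega_i})\to H_1(M)\cong\mathbb{Q}^\gamma$ is onto, so $g_i\ge\gamma$.

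Two alternative routes also work:
\begin{itemize}
\item Embed the handlebody in $S^3$ and use Alexander duality to get $b_1(\overline{\Omega_i})=g_i$.
\item Prove surjectivity of $H_1(\Sigma_i)\to H_1(N)$ directly. The map $H_1(\Sigma_i)\to H_1(W')$ is onto because excision gives $H_1(W',\Sigma_i)\cong H_1(N,\overline{\Omega_i})=0$. The map $H_1(W')\to H_1(N)$ is onto because $\partial H_\epsilon\subset W'$.
\end{itemize}

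A smaller point: $\Sigma_i$ is connected because $H_2(M)=0$, so the connecting map $H_1(M)\to\widetilde H_0(\Sigma_i)$ (intersection numbers with the components) vanishes. Freeness of $H_1(M)$ is not the relevant input. The paper itself gives no proof of this lemma; it quotes it from Chodosh--Lai--Xu.
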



\section{Proof of the main theorems}
Recall that \((M,g)\) is either parabolic or nonparabolic. In the parabolic case, we construct an exhaustion using the level sets of the barrier function, namely a positive proper harmonic function on the parabolic end, extended by zero to the whole manifold. In Proposition \ref{prop:Parabolic-exhaustion}, we will show that the boundaries of the domains in this exhaustion are all diffeomorphic to either \(\mathbb{S}^2\) or \(\mathbb{T}^2\).

In the nonparabolic case, we construct an exhaustion using the level sets of the minimal positive Green's function. In Proposition \ref{prop:Nonparabolic-exhaustion}, we will show that the boundaries of the domains in this exhaustion are all diffeomorphic to \(\mathbb{S}^2\). It then follows from Lemmas \ref{lem:sphere-exhaustion-R3} and \ref{lem:torus-exhaustion-R3} that Theorem \ref{Thm:Diffeo-to-R3} holds. Similarly, Theorem \ref{genus-at-most-1} follows from Lemma \ref{lem:handlebody-genus-boundary}.

In the remainder of the paper, we focus on bounding the genus of these level sets.

\subsection{Parabolic case}
In this case, there exists a nontrivial proper harmonic function $u$ on the unique end of $M$ with zero boundary value by \cite{LiTamHarmonic}*{Lemma 1.2}, and $u$ can be chosen to be proper by \cite{Nakai1962}. Extending this function by zero yields a continuous function on all of $M$, and its level sets may be used to exhaust $M$. We show that this exhaustion has the desired boundary properties.

\begin{proposition}\label{prop:Parabolic-exhaustion}
    Let $(M,g)$ be a complete Riemannian $3$-manifold with one end, finite first Betti number and $R_g\ge 0$. If $(M,g)$ is parabolic and satisfies the regularity assumption \eqref{eq:Ricci-lower-bound}, then $M$ admits an exhaustion $\Omega_i$ such that for all $i$ either $\partial\Omega_i\cong \mb S^2$ or  $\partial\Omega_i\cong \mb T^2$.
\end{proposition}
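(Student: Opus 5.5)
The plan is to take $\Omega_i=\{u<t_i\}$, where $u$ is the barrier function extended by zero and $t_i\to\infty$ are regular values (Sard). Since $u$ is proper on the end, each $\Omega_i$ is precompact. By Lemma \ref{lem:levelset-connected}, each $\Sigma_t=\{u=t\}$ is connected, and it is a closed orientable surface because it is a regular level set in an orientable region. (If needed, we pass to the orientable case or note that the level sets separate.) So we only need to show $\operatorname{genus}(\Sigma_t)\le 1$ for a sequence of regular values tending to infinity. Along that subsequence we then get boundaries that are $\mathbb S^2$ or $\mathbb T^2$.

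The key computation is a Bochner/Gauss–Bonnet identity for $\mA_1(t)=\int_{\Sigma_t}|\nabla u|^2$. Write $H$ for the mean curvature of $\Sigma_t$ and $\nu=\nabla u/|\nabla u|$. Harmonicity gives $\nabla_\nu|\nabla u|=-H|\nabla u|$. The co-area formula then gives
\[
\mA_1'(t)=\int_{\Sigma_t}\frac{\nabla_\nu(|\nabla u|^2)}{|\nabla u|}+\cdots .
\]
More usefully, the Stern-type identity $\Delta|\nabla u|=\frac{1}{2|\nabla u|}\big(|\nabla^2u|^2-|\nabla|\nabla u||^2\big)+\frac{|\nabla u|}{2}\big(R_g-R_{\Sigma}+|\nabla^2u|^2|\nabla u|^{-2}\big)$ can be integrated over $\{a<u<b\}$, with the co-area formula and Gauss–Bonnet $\int_{\Sigma_t}R_\Sigma=8\pi(1-\operatorname{genus}\Sigma_t)=4\pi\chi(\Sigma_t)$. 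Here $R_\Sigma=2K_\Sigma$ and $\chi=2-2\gamma$. This yields, as in Stern and Munteanu–Wang,
\[
\int_{\Sigma_b}|\nabla u|\,\partial_\nu|\nabla u|-\int_{\Sigma_a}\cdots\ \ge\ \int_a^b\Big(\tfrac12\int_{\Sigma_t}\big(R_g+\tfrac{|\nabla^2u|^2}{|\nabla u|^2}\big)-2\pi\chi(\Sigma_t)\Big)dt .
\]
The boundary terms are expressible through derivatives of $\mA_1$. Indeed $\int_{\Sigma_t}|\nabla u|=c$ is constant (the flux), and $\frac{d}{dt}\mA_1$ is the integral of $\partial_\nu|\nabla u|$ against a weight. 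So we get a differential inequality of the schematic form $\mA_1''(t)\ge -4\pi\chi(\Sigma_t)+\int_{\Sigma_t}\frac{|\nabla^2u|^2}{|\nabla u|^2}\ge 8\pi(\gamma_t-1)$.

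Suppose, for contradiction, that $\gamma_t\ge 2$ for all regular $t\ge t_0$. Then $\mA_1''\ge 8\pi$, so $\mA_1(t)\ge 4\pi t^2-Ct$ for large $t$. The upper bound should come from \eqref{eq:Ricci-lower-bound}. Cheng–Yau gradient estimates for the positive harmonic function $u$ on the end give $|\nabla u|\le C(K)\,u$ away from the boundary. A sharper route is to use the flux: $\mA_1(t)\le \sup_{\Sigma_t}|\nabla u|\cdot c$. What we need is that $\sup_{\Sigma_t}|\nabla u|$ grows strictly slower than $t^2$ in an averaged sense. I expect this to be the main obstacle. Cheng–Yau alone only gives $\mA_1\le Cct$, which would in fact suffice, since $Cct$ is linear in $t$ and $4\pi t^2$ is quadratic. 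The catch is that the Cheng–Yau constant needs a uniform ball radius: applying it on unit balls in $\{u>t_0\}$ at distance at least one from $\Sigma_{t_0}$ gives $|\nabla u|\le C\sqrt{-K}\,u+Cu$. Justifying the integration by parts at critical points is handled by the appendix of \cite{Colding-Minicozzi2025}.

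Given the contradiction, there are arbitrarily large regular $t$ with $\gamma_t\le1$. However, we need this for all $i$ along a sequence, so the argument must rule out $\gamma\ge2$ only on a sequence of intervals. To handle this, I will localize: integrate the inequality on $[t,2t]$ and show that if $\gamma\ge2$ on a set of large measure, the growth still contradicts the linear upper bound. Then I pick regular values $t_i\to\infty$ with $\gamma_{t_i}\le1$. Since the $\Sigma_{t_i}$ are connected and closed orientable, each $\partial\Omega_i$ is $\mb S^2$ or $\mb T^2$.
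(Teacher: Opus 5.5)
Your argument is correct, and it takes a shorter route than the paper. The paper assumes only that all large regular level sets have genus at least one. Under that hypothesis Gauss--Bonnet merely makes $\mB_1=\mA_1'$ nondecreasing. That gives at most linear growth of $\mA_1$, which does not beat the Cheng--Yau/flux bound $\mA_1(r)\le \mA_0\sup_{\Xi_r}|\nabla u|\le Cr$.

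The paper therefore has to use the Hessian term. Cauchy--Schwarz gives $\mB_1'\ge \mB_1^2/(2\mA_1)$, i.e.\ the Riccati inequality $\ha'\ge-\frac12\ha^2$ for $\ha=(\log\mA_1)'$ (equivalently, convexity of $\sqrt{\mA_1}$). This upgrades $\mB_1(r_1)>0$ to $\mA_1\ge cr^2$, which forces $\mB_1\le 0$ and then $\mB_1\to 0$. Finally the paper controls the genus through the nonnegative tail integral
$-\mB_1(r)=\int_r^\infty\big(\frac12\int_{\Xi_t}(\frac{|\nabla^2u|^2}{|\nabla u|^2}+R_g)\,d\sigma+4\pi(\gamma_t-1)\big)\,dt$.

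You instead assume genus at least two, so Gauss--Bonnet alone gives $\mB_1'\ge 4\pi$. Then $\mB_1$ grows linearly and $\mA_1$ grows quadratically outright, contradicting the same linear upper bound. You need no Hessian term, no Riccati comparison, and no limit argument for $\mB_1$.

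The negation of your contradiction hypothesis is already a sequence of regular values $t_i\to\infty$ with $\gamma_{t_i}\le 1$. That is exactly what the proposition needs, so your final ``localization'' paragraph is unnecessary.

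What the paper's route buys is extra structure under the weaker genus-at-least-one hypothesis. It yields $\mB_1\le 0$, $\mB_1\to 0$, and finiteness of the tail integrals of $|\nabla^2u|^2/|\nabla u|^2$ and $R_g$. It also shows that levels of genus at least two occupy only a set of levels of finite measure. This mirrors the Colding--Minicozzi analysis of $a(r)$ in the nonparabolic case.

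You have two computational slips, neither of which affects the argument.
\begin{enumerate}
\item The Stern-type identity you wrote double-counts the Hessian. The correct identity is $\Delta|\nabla u|=\frac{1}{2|\nabla u|}\big(|\nabla^2u|^2+(R_g-R_{\Sigma})|\nabla u|^2\big)$.
\item Integrating this over $\{a<u<b\}$ produces boundary terms $\int_{\Sigma_t}\partial_\nu|\nabla u|\,d\sigma=\mB_1(t)=\mA_1'(t)$, without the extra weight $|\nabla u|$.
\end{enumerate}
Since $\frac12\int_{\Sigma_t}R_{\Sigma_t}\,d\sigma=2\pi\chi(\Sigma_t)=4\pi(1-\gamma_t)$, the correct bound is $\mA_1''\ge 4\pi(\gamma_t-1)\ge 4\pi$ rather than $8\pi$. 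This changes nothing in your argument.
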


We layout our setting as follows. Let $E$ be the (parabolic) end of $M$, there exists a nontrivial proper harmonic function $u$ on $E$ satisfying
\begin{align*}
\left\{\begin{array}{cc}
    &u=0 \text{ on } \partial E,\\
    &\Delta u=0 \text{ on }E,\\ 
    &u(x)\to \infty \text{ as } x\to \infty.
\end{array}\right.
\end{align*}
By Sard's theorem and the properness of $u$, almost every level set of $u$ is a smooth closed surface. We refer to such level sets as \emph{regular level sets}. By Lemma \ref{lem:levelset-connected} and \ref{lem:contractible-orientable}, each regular level set of $u$ is connected and oriented, see also \cite{Munteanu--Wang2026}*{Lemma~2.2}.

To obtain a contradiction in the proof of Proposition \ref{prop:Parabolic-exhaustion}, suppose that there exists $r_0>0$ such that, for every regular value $r\ge r_0$, the level set
\[
\Xi_r:=\{u=r\}
\]
is a smooth closed surface of genus at least one, namely
\begin{equation}
    \mathrm{genus}(\Xi_r)\ge 1.
\end{equation}
Otherwise, one could find an exhaustion whose boundaries are diffeomorphic to $\mathbb S^2$.

We start with some local computations. Define
\[
\mA_0(r)\defeq\int_{\Xi_r} |\nabla u|\,d\sigma , \quad \mA_1(r)\defeq\int_{\Xi_r} |\nabla u|^2\,d\sigma,
\quad
\mB_1(r)\defeq \mA_1'(r).
\]
Let
\[
\boldsymbol{n}=\frac{\nabla u}{|\nabla u|}
\]
be the unit normal to \(\Xi_r\). 


\bigskip

\begin{lemma}\label{lem:A0-constant}
    $\mA_0(r)$ is a constant for $r\ge 0$.
\end{lemma}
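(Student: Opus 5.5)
The plan is to show that $\mA_0(r)$ is the flux of $\nabla u$ through $\Xi_r$, and then apply the divergence theorem on the region between two level sets.

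For a regular value $r$, the unit normal is $\boldsymbol{n}=\nabla u/|\nabla u|$, so $|\nabla u| = \langle \nabla u,\boldsymbol{n}\rangle$ on $\Xi_r$. Hence
\[
\mA_0(r)=\int_{\Xi_r}\langle \nabla u,\boldsymbol{n}\rangle\,d\sigma .
\]
Now take two regular values $0\le r_1<r_2$ and the compact region $\{r_1\le u\le r_2\}\subset E$. It is compact because $u$ is proper, and its boundary is $\Xi_{r_2}\cup\Xi_{r_1}$. The outward normal is $\boldsymbol{n}$ on $\Xi_{r_2}$ and $-\boldsymbol{n}$ on $\Xi_{r_1}$. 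Since $\Delta u=0$, the divergence theorem gives
\[
0=\int_{\{r_1\le u\le r_2\}}\Delta u\,dV=\mA_0(r_2)-\mA_0(r_1).
\]
The case $r_1=0$ is allowed, since $\Xi_0=\partial E$ is smooth and $u$ is harmonic up to the boundary. So $\mA_0$ takes the same value at all regular values.

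To extend this to every $r\ge 0$, I would use the coarea formula. For any $r$, $\mA_0(r)=\int_{\Xi_r}|\nabla u|\,d\sigma$ depends continuously on $r$; this is among the continuity facts for level-set integrals cited from \cite{Colding-Minicozzi2025}*{Appendix A}. A more robust alternative avoids continuity: for critical $r$, approximate the region $\{u<r\}\cap E$ by $\{u<r_k\}\cap E$ with regular values $r_k\uparrow r$. Since regular values have full measure by Sard's theorem, $\mA_0$ is constant almost everywhere, and the approximation then gives the identity for all $r$.

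This last step is the only real obstacle, because the critical set of a harmonic function in dimension $3$ is a set of codimension at least $2$ and is not smooth, so the divergence theorem cannot be applied directly on a critical level. The natural way to define $\mA_0(r)$ at a critical value is through the coarea formula. The constant is also nonzero: by the Hopf lemma, $\mA_0(0)=\int_{\partial E}\partial_\nu u>0$.
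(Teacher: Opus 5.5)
Your argument is correct and is essentially the paper's proof: the paper just differentiates, $\mA_0'(r)=\int_{\Xi_r}\Delta u/|\nabla u|\,d\sigma=0$. That is the infinitesimal form of your divergence-theorem computation on $\{r_1\le u\le r_2\}$, the same divergence-theorem-plus-coarea mechanism the paper writes out for $\mA_1$ in the next lemma. Your extra care at $r=0$ and at critical values goes beyond what the paper writes, but your ``approximation'' alternative does not actually avoid continuity: passing from $\Xi_{r_k}$ to $\Xi_r$ still requires convergence of the level-set integrals (or a Gauss--Green theorem for the finite-perimeter set $\{u<r\}$), which is what the paper defers to \cite{Colding-Minicozzi2025}*{Appendix A}.
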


\begin{proof}
    Differentiating in $r$ gives
    \begin{align*}
        \mA_0'(r)=\int_{\Xi_r} \frac{\Delta u}{|\nabla u|}d\sigma=0,
    \end{align*}
    since $u$ is harmonic.
\end{proof}
Then we derive a differential inequality for $\mA_1(r)$.

\begin{lemma}\label{lem:parabolic-derivative-A}
    We have
\begin{equation}
\mA'_1(r)=\int_{\Xi_r} \nabla^2 u (\boldsymbol{n},\boldsymbol{n})d\sigma=\mB_1(r).
\end{equation}
\end{lemma}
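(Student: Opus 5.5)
We compute $\mA_1'(r)$ with the coarea formula, arguing as in the proof of Lemma \ref{lem:A0-constant}. For $r_1<r_2$ regular values, write
\[
\mA_1(r_2)-\mA_1(r_1)=\int_{\Xi_{r_2}}|\nabla u|\,\langle \nabla u,\boldsymbol n\rangle\,d\sigma-\int_{\Xi_{r_1}}|\nabla u|\,\langle \nabla u,\boldsymbol n\rangle\,d\sigma .
\]
Here $\boldsymbol n=\nabla u/|\nabla u|$ is the outward normal of $\{r_1<u<r_2\}$ on $\Xi_{r_2}$ and the inward normal on $\Xi_{r_1}$. The divergence theorem applied to the vector field $X=|\nabla u|\nabla u$ on the compact region $\{r_1\le u\le r_2\}$ (compact since $u$ is proper) then gives
\[
\mA_1(r_2)-\mA_1(r_1)=\int_{\{r_1<u<r_2\}}\divsymb\big(|\nabla u|\nabla u\big)\,dV .
\]

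Next, compute the divergence. Since $\Delta u=0$, we have $\divsymb(|\nabla u|\nabla u)=\langle\nabla|\nabla u|,\nabla u\rangle$. Away from critical points, $\nabla|\nabla u|=\nabla^2u(\nabla u,\cdot)/|\nabla u|$, so
\[
\langle\nabla|\nabla u|,\nabla u\rangle=\frac{\nabla^2u(\nabla u,\nabla u)}{|\nabla u|}=|\nabla u|\,\nabla^2u(\boldsymbol n,\boldsymbol n).
\]
The coarea formula converts the volume integral into
\[
\int_{r_1}^{r_2}\int_{\Xi_s}\nabla^2u(\boldsymbol n,\boldsymbol n)\,d\sigma\,ds .
\]
Differentiating in $r_2$ at regular values yields $\mA_1'(r)=\int_{\Xi_r}\nabla^2u(\boldsymbol n,\boldsymbol n)\,d\sigma$, which is $\mB_1(r)$ by definition.

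The main obstacle is justifying these manipulations near the critical set of $u$. There the vector field $|\nabla u|\nabla u$ is only Lipschitz, and $\boldsymbol n$ is undefined. First, $|\nabla u|\nabla u$ is $C^{0,1}$, so the divergence theorem holds with its a.e. divergence $\langle\nabla|\nabla u|,\nabla u\rangle$, which is bounded by $|\nabla^2u||\nabla u|$. This expression vanishes on the critical set, and that set has measure zero for a nonconstant harmonic function by unique continuation. Second, the critical values have measure zero by Sard's theorem, so the coarea identity holds for a.e. $s$ with integrable integrand. For the differentiability of $\mA_1$ at every $r$, rather than merely a.e. or in integrated form, I would appeal to \cite{Colding-Minicozzi2025}*{Appendix A} as indicated in the introduction. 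Alternatively, one can regularize by $\sqrt{|\nabla u|^2+\epsilon^2}\,\nabla u$ and let $\epsilon\to0$ by dominated convergence.
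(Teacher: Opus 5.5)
Your proposal is correct and follows the paper's proof step for step: you write $\mA_1$ as the flux of $|\nabla u|\nabla u$ through $\Xi_r$ and apply the divergence theorem. You then use $\Delta u=0$ to get $\operatorname{div}(|\nabla u|\nabla u)=|\nabla u|\,\nabla^2u(\boldsymbol n,\boldsymbol n)$, apply the coarea formula, and differentiate. Your added remarks on the Lipschitz regularity of the field near the critical set, and on the critical set having measure zero, fill in details the paper leaves implicit and defers to the appendix of Colding--Minicozzi.
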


\begin{proof}
 We first rewrite $\mA_1(r)$ as follows. 
\[
\mA_1(r)=\int_{\Xi_r} |\nabla u|^2\,d\sigma=\int_{\Xi_r} \left\langle |\nabla u|\nabla u, \frac{\nabla u}{|\nabla u|} \right\rangle \,d\sigma .
\]
Then the divergence theorem yields
\[
\mA_1(r)-\mA_1(r_0)
=
\int_{\{r_0\le u\le r\}} \divsymb(|\nabla u|\nabla u)\,dV.
\]
Since \(\Delta u=0\), direct calculation yields
\begin{align*}
\divsymb(|\nabla u|\nabla u)&=\langle \nabla |\nabla u|,\nabla u\rangle=
|\nabla u|\nabla^2 u (\boldsymbol{n},\boldsymbol{n}).
\end{align*}
Hence,
\[
\mA_1(r)-\mA_1(r_0)
=
\int_{\{r_0\le u\le r\}} |\nabla u|\,\nabla^2 u (\boldsymbol{n},\boldsymbol{n})\,dV.
\]
Applying the co-area formula yields
\[
\mA(r)-\mA(r_0)
=
\int_{r_0}^r \left(\int_{\Sigma_t} \nabla^2 u (\boldsymbol{n},\boldsymbol{n})\,d\sigma\right)dt.
\]
Differentiating in \(r\) gives
\[
\mA'_1(r)=\int_{\Xi_r} \nabla^2 u (\boldsymbol{n},\boldsymbol{n})\,d\sigma.
\]
\end{proof}

\begin{lemma}
If \(r_1<r_2\) are regular values, then
\begin{equation}\label{eq:formula-of-B_1}
\mB_1(r_2)-\mB_1(r_1)
=
\int_{r_1}^{r_2}\int_{\Xi_r}
\left(
\frac12 \frac{|\nabla^2u|^2}{|\nabla u|^2}
+\frac12 R_g
-\frac12 R_{\Xi_r}
\right)\,d\sigma\,dr
\ge 0.
\end{equation}
\end{lemma}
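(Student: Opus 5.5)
The plan is to run the argument of Lemma \ref{lem:parabolic-derivative-A} one derivative higher, combined with Stern's Bochner-type identity for harmonic functions in dimension three.

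\emph{Step 1: write $\mB_1$ as a flux.} By Lemma \ref{lem:parabolic-derivative-A},
\[
\mB_1(r)=\int_{\Xi_r}\nabla^2u(\boldsymbol n,\boldsymbol n)\,d\sigma=\int_{\Xi_r}\Big\langle \nabla|\nabla u|,\frac{\nabla u}{|\nabla u|}\Big\rangle\,d\sigma,
\]
which is the flux of $\nabla|\nabla u|$ through $\Xi_r$. For regular values $r_1<r_2$, the divergence theorem on $\{r_1\le u\le r_2\}$ gives
\[
\mB_1(r_2)-\mB_1(r_1)=\int_{\{r_1\le u\le r_2\}}\Delta|\nabla u|\,dV .
\]
Applying the co-area formula then gives
\[
\int_{r_1}^{r_2}\int_{\Xi_r}\frac{\Delta|\nabla u|}{|\nabla u|}\,d\sigma\,dr .
\]

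\emph{Step 2: compute $\Delta|\nabla u|$ at points where $\nabla u\neq0$.} Bochner's formula with $\Delta u=0$ gives
\[
|\nabla u|\,\Delta|\nabla u|=|\nabla^2u|^2-|\nabla|\nabla u||^2+\Ric(\nabla u,\nabla u).
\]
I would then express $\Ric(\boldsymbol n,\boldsymbol n)$ through the traced Gauss equation:
\[
2\Ric(\boldsymbol n,\boldsymbol n)=R_g-R_{\Xi_r}+H^2-|A|^2 .
\]
On $\Xi_r$ the second fundamental form is $A=\nabla^2u|_{T\Xi_r}/|\nabla u|$, and harmonicity gives $H=-\nabla^2u(\boldsymbol n,\boldsymbol n)/|\nabla u|$.

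Next I would decompose $\nabla^2 u$ in an orthonormal frame adapted to $\boldsymbol n$:
\[
|\nabla^2u|^2=|\nabla u|^2|A|^2+2\,|\nabla^T|\nabla u||^2+\nabla^2u(\boldsymbol n,\boldsymbol n)^2,
\qquad
|\nabla|\nabla u||^2=|\nabla^T|\nabla u||^2+\nabla^2u(\boldsymbol n,\boldsymbol n)^2 .
\]
Substituting these, the terms should collapse to Stern's identity
\[
\Delta|\nabla u|=\frac{1}{2|\nabla u|}\Big(|\nabla^2u|^2+|\nabla u|^2\big(R_g-R_{\Xi_r}\big)\Big).
\]
Dividing by $|\nabla u|$ and inserting this into Step 1 yields exactly the integrand in \eqref{eq:formula-of-B_1}. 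I expect this algebra to be routine bookkeeping; the only point to check is that the cross terms $|\nabla^T|\nabla u||^2$ cancel as claimed.

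\emph{Step 3: the obstacle, critical points.} The main difficulty is that $|\nabla u|$ is only Lipschitz, and $\Delta|\nabla u|$ is singular on the critical set, so the divergence theorem in Step 1 is not immediately justified. I would first apply the argument to $\sqrt{|\nabla u|^2+\varepsilon}$, where Bochner's formula holds classically, and then let $\varepsilon\to0$. The contribution near the critical set should be handled as in Stern, using that critical points of a harmonic function have measure zero and that the integrand is nonnegative up to the curvature terms. For regularity of the level set integrals I would cite \cite{Colding-Minicozzi2025}*{Appendix A}. If this limit only produces an inequality, that inequality still suffices for the nonnegativity claim.

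\emph{Step 4: sign.} Here I use $R_g\ge0$ and $|\nabla^2u|^2\ge0$. For the remaining term, Gauss--Bonnet gives, for almost every $r\ge r_0$ (the regular values),
\[
\int_{\Xi_r}R_{\Xi_r}\,d\sigma=2\int_{\Xi_r}K\,d\sigma=4\pi\chi(\Xi_r)=8\pi\big(1-\mathrm{genus}(\Xi_r)\big)\le0,
\]
by the standing contradiction hypothesis $\mathrm{genus}(\Xi_r)\ge1$. The surfaces $\Xi_r$ are connected and orientable, as noted before the statement. Hence every term in the integrand of \eqref{eq:formula-of-B_1} integrates to something nonnegative, and $\mB_1(r_2)-\mB_1(r_1)\ge0$.
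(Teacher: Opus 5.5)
Your proposal is correct and follows the paper's proof essentially step for step. Both arguments write $\mB_1$ as the flux of $\nabla|\nabla u|$ and apply the divergence theorem and coarea formula on $\{r_1\le u\le r_2\}$. Both then use Bochner's formula plus the Gauss equation to get $\Delta|\nabla u|=\frac{1}{2|\nabla u|}\bigl(|\nabla^2u|^2+|\nabla u|^2(R_g-R_{\Xi_r})\bigr)$, and Gauss--Bonnet with $\mathrm{genus}(\Xi_r)\ge 1$ for the sign. Your frame decomposition is just more explicit than the paper's (the tangential terms $|\nabla^T|\nabla u||^2$ recombine into half of $|\nabla^2u|^2$ rather than cancel), and your regularization near critical points is extra care that the paper skips, deferring instead to \cite{Colding-Minicozzi2025}*{Appendix A}.
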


\begin{proof}
We begin with
\begin{align*}
\mB_1(r)
=
\int_{\Xi_r} \nabla^2 u (\boldsymbol{n},\boldsymbol{n})\,d\sigma
=
\int_{\Xi_r} \left\langle \nabla |\nabla u|, \boldsymbol{n} \right\rangle \,d\sigma.
\end{align*}
By the divergence theorem,
\[
\mB_1(r_2)-\mB_1(r_1)
=
\int_{\{r_1\le u\le r_2\}} \Delta |\nabla u|\,dV.
\]
Applying the coarea formula, we obtain
\begin{equation}\label{eq:derivative-B}
\mB_1(r_2)-\mB_1(r_1)
=
\int_{r_1}^{r_2}
\left(
\int_{\Xi_t}\frac{\Delta |\nabla u|}{|\nabla u|}\,d\sigma
\right)dt.
\end{equation}

Since \(u\) is harmonic, Bochner's formula gives
\[
\frac12 \Delta |\nabla u|^2
=
|\nabla^2u|^2+\operatorname{Ric}(\nabla u,\nabla u).
\]
On the other hand,
\[
\Delta |\nabla u|^2
=
2|\nabla u|\Delta |\nabla u|+2|\nabla |\nabla u||^2.
\]
Therefore,
\[
2|\nabla u|\Delta |\nabla u|+2|\nabla |\nabla u||^2
=
2|\nabla^2u|^2+2\operatorname{Ric}(\nabla u,\nabla u),
\]
and hence
\[
\Delta |\nabla u|
=
\frac{1}{|\nabla u|}
\left(
|\nabla^2u|^2-|\nabla |\nabla u||^2+\operatorname{Ric}(\nabla u,\nabla u)
\right).
\]

Moreover, the Gauss--Codazzi equation yields
\begin{align*}
\Ric (\nabla u, \nabla u)|\nabla u|^{-2}
=
\frac{1}{2}\left(R_g-R_{\Xi_r}\right)
+\frac{1}{|\nabla u|^2}\left(|\nabla |\nabla u||^2-\frac{1}{2}|\nabla^2 u|^2\right).
\end{align*}
Substituting this into the expression for \(\Delta |\nabla u|\), we obtain
\[
\Delta |\nabla u|
=
\frac{1}{2|\nabla u|}
\left( |\nabla^2u|^2+ R_g|\nabla u|^2- R_{\Xi_r}|\nabla u|^2
\right).
\]

Because \(R_g\ge 0\) and \(\mathrm{genus}(\Xi_r)\ge 1\), the Gauss--Bonnet formula implies
\[
\int_{\Xi_r}R_{\Xi_r}\,d\sigma=4\pi\chi(\Xi_r)\le 0,
\]
and therefore
\[
\frac12\int_{\Xi_r}(R_g-R_{\Xi_r})\,d\sigma\ge 0.
\]
Dividing by \(|\nabla u|\) and substituting the resulting expression into \eqref{eq:derivative-B}, we obtain \eqref{eq:formula-of-B_1}. This completes the proof.
\end{proof}

\begin{lemma}\label{lem:parabolic-derivative-a}
For each regular value \(r\), define
\[
\ha(r):=\frac{\mA_1'(r)}{\mA_1(r)}=\bigl(\log \mA_1\bigr)'(r),
\]
and let
\[
V:=\frac{\nabla |\nabla u|}{\mA_1(r)}
\]
be the associated \(C^1\) vector field. Then, for every regular value \(r\),
\begin{equation}\label{eq:parabolic-V-normal}
\int_{\Xi_r}\langle V,\mathbf n\rangle\,d\sigma=\ha(r),
\end{equation}
and
\begin{equation}\label{eq:parabolic-divV-lower}
\int_{\Xi_r}\frac{\operatorname{div}V}{|\nabla u|}\,d\sigma
\ge -\frac12\ha(r)^2.
\end{equation}
\end{lemma}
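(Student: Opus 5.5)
The plan is to verify the two claims directly from the identities already established for $\mA_1$ and $\mB_1$. Here $\mA_1(r)$ is a constant on each level set, so $V$ differs from $\nabla|\nabla u|$ only by the factor $1/\mA_1(r)$.

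\emph{Proof of \eqref{eq:parabolic-V-normal}.} By Lemma \ref{lem:parabolic-derivative-A},
\[
\int_{\Xi_r}\langle \nabla|\nabla u|,\mathbf n\rangle\,d\sigma=\int_{\Xi_r}\nabla^2u(\mathbf n,\mathbf n)\,d\sigma=\mA_1'(r).
\]
Dividing by $\mA_1(r)$ gives $\ha(r)$.

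\emph{Proof of \eqref{eq:parabolic-divV-lower}: the pointwise identity.} Since $\mA_1(r)$ is constant on $\Xi_r$, we have $\operatorname{div}V=\Delta|\nabla u|/\mA_1(r)$ along $\Xi_r$. We use the formula from the proof of \eqref{eq:formula-of-B_1},
\[
\frac{\Delta|\nabla u|}{|\nabla u|}=\frac12\frac{|\nabla^2u|^2}{|\nabla u|^2}+\frac12 R_g-\frac12 R_{\Xi_r}.
\]
Integrate over $\Xi_r$. The curvature terms contribute $\frac12\int_{\Xi_r}(R_g-R_{\Xi_r})\ge 0$, because $R_g\ge0$ and Gauss--Bonnet with $\mathrm{genus}(\Xi_r)\ge1$ gives $\int_{\Xi_r} R_{\Xi_r}=4\pi\chi(\Xi_r)\le0$. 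The Hessian term is nonnegative as well.

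This already yields $\int_{\Xi_r}\operatorname{div}V/|\nabla u|\ge0\ge-\frac12\ha^2$, so the inequality is immediate once the sign of $\mA_1$ is checked. We have $\mA_1(r)>0$ on regular level sets, since $\nabla u\neq0$ there and $\Xi_r$ is nonempty because $u$ is proper and nontrivial.

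\emph{A sharper bound, if needed later.} To keep a usable quantitative estimate, and presumably what the authors intend, we bound the Hessian term from below instead of discarding it. We have $|\nabla^2u|^2\ge(\nabla^2u(\mathbf n,\mathbf n))^2$. By Cauchy--Schwarz with the weight $|\nabla u|$,
\[
\Big(\int_{\Xi_r}\nabla^2u(\mathbf n,\mathbf n)\Big)^2\le \int_{\Xi_r}\frac{(\nabla^2u(\mathbf n,\mathbf n))^2}{|\nabla u|^2}\int_{\Xi_r}|\nabla u|^2,
\]
that is,
\[
\int_{\Xi_r}\frac{|\nabla^2u|^2}{|\nabla u|^2}\ge\frac{(\mA_1')^2}{\mA_1}.
\]
Dividing by $\mA_1$ then gives the sharper bound $\int_{\Xi_r}\operatorname{div}V/|\nabla u|\ge\frac12\ha^2/\mA_1\cdot\mA_1^{0}$, up to normalization.

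In any case, the stated bound $\ge-\frac12\ha^2$ follows a fortiori. The only subtlety is justifying the pointwise formula and the integration on regular level sets, which is where $|\nabla u|\ne0$ is needed. The $C^1$ regularity of $V$ near $\Xi_r$ holds because $|\nabla u|>0$ in a neighborhood of a regular level set.
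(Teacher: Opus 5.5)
Your proof of \eqref{eq:parabolic-V-normal} is correct and is the paper's. The proof of \eqref{eq:parabolic-divV-lower} has a genuine gap. The step ``since $\mA_1(r)$ is constant on $\Xi_r$, $\operatorname{div}V=\Delta|\nabla u|/\mA_1(r)$ along $\Xi_r$'' is wrong. For the lemma to be usable, $V$ must be the field $V(x)=\nabla|\nabla u|(x)/\mA_1(u(x))$. The next proposition applies the divergence theorem to $V$ on $\{r_1\le u\le r_2\}$ and needs the flux through every $\Xi_s$ to equal $\ha(s)$. The normalization $\mA_1(u)$ is constant along each level set, but it varies in the normal direction, and the divergence sees this, since $\nabla\bigl(\mA_1(u)^{-1}\bigr)=-\mA_1'(u)\mA_1(u)^{-2}\nabla u\neq 0$. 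Hence on $\Xi_r$
\[
\frac{\operatorname{div}V}{|\nabla u|}=\frac{1}{\mA_1(r)}\,\frac{\Delta|\nabla u|}{|\nabla u|}-\frac{\ha(r)}{\mA_1(r)}\,\nabla^2u(\mathbf n,\mathbf n),
\]
and the last term integrates over $\Xi_r$ to exactly $-\ha(r)^2$. This is consistent with $\int_{\Xi_r}\operatorname{div}V/|\nabla u|\,d\sigma=\ha'(r)=\mA_1''/\mA_1-\ha^2$. Your conclusion that the integral is $\ge 0$ therefore does not follow. It would say that $\ha$ is nondecreasing, i.e.\ that $\log\mA_1$ is convex. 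That is strictly stronger than the convexity of $\sqrt{\mA_1}$ that the same ingredients give in the remark after Proposition~\ref{prop:parabolic-quadratic-growth}, and it should have been a warning sign.

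The missing idea is that the Cauchy--Schwarz estimate is not an optional refinement; it is the essential step that absorbs this $-\ha^2$. After discarding the curvature terms exactly as you do, you get
\[
\int_{\Xi_r}\frac{\operatorname{div}V}{|\nabla u|}\,d\sigma\ge\frac{1}{2\mA_1(r)}\int_{\Xi_r}\frac{|\nabla^2u|^2}{|\nabla u|^2}\,d\sigma-\ha(r)^2 .
\]
Your inequality $\int_{\Xi_r}|\nabla^2u|^2/|\nabla u|^2\,d\sigma\ge(\mA_1')^2/\mA_1$, divided by $2\mA_1$, bounds the first term below by $\frac12\ha(r)^2$. This yields $\frac12\ha^2-\ha^2=-\frac12\ha^2$, and explains why the constant in \eqref{eq:parabolic-divV-lower} is $\frac12-1$. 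Your ``sharper bound'' paragraph garbles this normalization. Done correctly, it is the whole proof and coincides with the paper's.
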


\begin{proof}
Since \(\mathbf n=\nabla u/|\nabla u|\), we have
\[
\langle \nabla |\nabla u|,\mathbf n\rangle
=\nabla^2u(\mathbf n,\mathbf n).
\]
Hence, by Lemma~\ref{lem:parabolic-derivative-A},
\[
\int_{\Xi_r}\langle V,\mathbf n\rangle\,d\sigma
=
\frac{1}{\mA_1(r)}\int_{\Xi_r}\nabla^2u(\mathbf n,\mathbf n)\,d\sigma
=
\frac{\mA_1'(r)}{\mA_1(r)}
=\ha(r),
\]
which proves \eqref{eq:parabolic-V-normal}.

Next, since \(V=\mA_1(r)^{-1}\nabla |\nabla u|\), we compute
\[
\operatorname{div}V
=
\frac{\Delta |\nabla u|}{\mA_1(r)}
-\frac{\mA_1'(r)}{\mA_1(r)^2}\,\langle \nabla u,\nabla |\nabla u|\rangle.
\]
Dividing by \(|\nabla u|\) and restricting to \(\Xi_r\), we obtain
\[
\frac{\operatorname{div}V}{|\nabla u|}
=
\frac{1}{\mA_1(r)}\,\frac{\Delta |\nabla u|}{|\nabla u|}
-\frac{\ha(r)}{\mA_1(r)}\,\nabla^2u(\mathbf n,\mathbf n).
\]
Since \(u\) is harmonic, Bochner's formula together with the Gauss--Codazzi equation yields
\[
\frac{\Delta |\nabla u|}{|\nabla u|}
=
\frac12\,\frac{|\nabla^2u|^2}{|\nabla u|^2}
+\frac12\,(R_g-R_{\Xi_r}).
\]
Therefore,
\[
\int_{\Xi_r}\frac{\operatorname{div}V}{|\nabla u|}\,d\sigma
=
\frac{1}{2\mA_1(r)}\int_{\Xi_r}\frac{|\nabla^2u|^2}{|\nabla u|^2}\,d\sigma
+\frac{1}{2\mA_1(r)}\int_{\Xi_r}(R_g-R_{\Xi_r})\,d\sigma
-\ha(r)^2.
\]

Because \(R_g\ge 0\) and \(\mathrm{genus}(\Xi_r)\ge 1\), the Gauss--Bonnet formula implies
\[
\int_{\Xi_r}R_{\Xi_r}\,d\sigma=4\pi\chi(\Xi_r)\le 0,
\]
and hence
\[
\frac{1}{2\mA_1(r)}\int_{\Xi_r}(R_g-R_{\Xi_r})\,d\sigma\ge 0.
\]
On the other hand, by the Cauchy--Schwarz inequality,
\[
\mB_1(r)^2
=
\left(\int_{\Xi_r}\nabla^2u(\mathbf n,\mathbf n)\,d\sigma\right)^2
\le
\left(\int_{\Xi_r}|\nabla u|^2\,d\sigma\right)
\left(\int_{\Xi_r}\frac{\bigl(\nabla^2u(\mathbf n,\mathbf n)\bigr)^2}{|\nabla u|^2}\,d\sigma\right).
\]
It follows that
\[
\frac{1}{2\mA_1(r)}\int_{\Xi_r}\frac{|\nabla^2u|^2}{|\nabla u|^2}\,d\sigma
\ge
\frac{1}{2\mA_1(r)}\int_{\Xi_r}\frac{\bigl(\nabla^2u(\mathbf n,\mathbf n)\bigr)^2}{|\nabla u|^2}\,d\sigma
\ge
\frac12\left(\frac{\mB_1(r)}{\mA_1(r)}\right)^2
=
\frac12\ha(r)^2.
\]
Combining the preceding estimates yields \eqref{eq:parabolic-divV-lower}.
\end{proof}

We are now in a position to derive an integral differential inequality for \(\ha\) and, in turn, a lower bound for its growth.

\begin{proposition}
If \(r_1<r_2\) are regular values, then
\begin{equation}\label{eq:parabolic-a-integral}
\ha(r_2)-\ha(r_1)\ge -\frac12\int_{r_1}^{r_2} \ha(r)^2\,dr.
\end{equation}
\end{proposition}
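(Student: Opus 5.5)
The plan is to integrate the divergence of the vector field \(V\) from Lemma~\ref{lem:parabolic-derivative-a} over the slab \(\{r_1\le u\le r_2\}\), and then to combine the divergence theorem with the co-area formula. The subtlety is that \(V=\nabla|\nabla u|/\mA_1(u)\) carries the normalization \(\mA_1(r)\) evaluated at the level through each point. So I regard \(V\) as a vector field defined by \(V(x)=\nabla|\nabla u|(x)/\mA_1(u(x))\), which is \(C^1\) near regular levels. Its divergence is the expression already computed in the proof of Lemma~\ref{lem:parabolic-derivative-a}.

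First, apply the divergence theorem on \(\{r_1\le u\le r_2\}\), whose boundary is \(\Xi_{r_2}\cup\Xi_{r_1}\) with outward normals \(\pm\mathbf n\). This gives
\[
\int_{\{r_1\le u\le r_2\}}\operatorname{div}V\,dV=\int_{\Xi_{r_2}}\langle V,\mathbf n\rangle\,d\sigma-\int_{\Xi_{r_1}}\langle V,\mathbf n\rangle\,d\sigma=\ha(r_2)-\ha(r_1),
\]
where the last equality is \eqref{eq:parabolic-V-normal}. Next, by the co-area formula the left side equals
\[
\int_{r_1}^{r_2}\int_{\Xi_r}\frac{\operatorname{div}V}{|\nabla u|}\,d\sigma\,dr .
\]
Finally, inserting \eqref{eq:parabolic-divV-lower} for every regular \(r\) (which is almost every \(r\), by Sard) yields \(\ha(r_2)-\ha(r_1)\ge-\frac12\int_{r_1}^{r_2}\ha^2\,dr\).

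The main obstacle is justifying these steps across critical values of \(u\) between \(r_1\) and \(r_2\). There, \(\nabla|\nabla u|\) may be singular and \(\mA_1\) may only be differentiable almost everywhere, so \(V\) is not globally \(C^1\). I would handle this in the same way as the computations for \(\mB_1\) above, relying on the regularity results cited from \cite{Colding-Minicozzi2025}*{Appendix A}. Concretely, I would either approximate \(|\nabla u|\) by \(\sqrt{|\nabla u|^2+\epsilon}\) and let \(\epsilon\to0\), or split the argument into a different order: write \(\ha=\mB_1/\mA_1\) directly and use \eqref{eq:formula-of-B_1} together with the absolute continuity of \(\mA_1\). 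The splitting route is the more robust one. Using \(\mB_1(r_2)-\mB_1(r_1)=\int_{r_1}^{r_2}\int_{\Xi_r}\frac{\Delta|\nabla u|}{|\nabla u|}\,d\sigma\,dr\), the quotient rule gives
\[
\ha(r_2)-\ha(r_1)=\int_{r_1}^{r_2}\left(\frac{1}{\mA_1}\int_{\Xi_r}\frac{\Delta|\nabla u|}{|\nabla u|}d\sigma-\ha^2\right)dr .
\]
Here I use that \(\mB_1\) is of bounded variation with a nonnegative singular part, by \eqref{eq:formula-of-B_1}, which only helps the inequality. The integrand is at least \(-\frac12\ha^2\), by the Cauchy--Schwarz and Gauss--Bonnet argument of Lemma~\ref{lem:parabolic-derivative-a}. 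I also need \(\mA_1>0\) for \(r\ge r_0\); this holds because \(\mA_1\ge\mA_0^2/\text{Area}(\Xi_r)\) and \(\mA_0>0\) is constant by Lemma~\ref{lem:A0-constant}.
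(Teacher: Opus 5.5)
Your main argument is the paper's proof: apply the divergence theorem to \(V\) on \(\{r_1\le u\le r_2\}\), use the coarea formula to rewrite the bulk integral as \(\int_{r_1}^{r_2}\int_{\Xi_r}\operatorname{div}V/|\nabla u|\,d\sigma\,dr\), and insert \eqref{eq:parabolic-divV-lower}. The paper handles critical values only by referring to Appendix A of \cite{Colding-Minicozzi2025}; your quotient-rule variant for \(\ha=\mB_1/\mA_1\) via \eqref{eq:formula-of-B_1} is the same computation reorganized, and it simply moves the regularity question onto \eqref{eq:formula-of-B_1}, so it is a valid extra safeguard rather than a different proof.
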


\begin{proof}
By \eqref{eq:parabolic-V-normal}, the divergence theorem, and the coarea formula,
\[
\ha(r_2)-\ha(r_1)
=
\int_{\Xi_{r_2}}\langle V,\mathbf n\rangle\,d\sigma
-\int_{\Xi_{r_1}}\langle V,\mathbf n\rangle\,d\sigma
=
\int_{\{r_1\le u\le r_2\}}\operatorname{div}V\,dV
\]
and hence
\[
\ha(r_2)-\ha(r_1)
=
\int_{r_1}^{r_2}
\left(
\int_{\Xi_r}\frac{\operatorname{div}V}{|\nabla u|}\,d\sigma
\right)\,dr.
\]
Applying \eqref{eq:parabolic-divV-lower} yields \eqref{eq:parabolic-a-integral}.
\end{proof}

\begin{corollary}\label{cor:parabolic-a-lower}
Assume that \(r_1\) is a regular value and that \(\ha(r_1)>0\). Then, for every regular value \(r\ge r_1\),
\begin{equation}\label{eq:parabolic-a-lower}
\ha(r)\ge \frac{2}{\,r-r_1+\frac{2}{\ha(r_1)}\,}.
\end{equation}
\end{corollary}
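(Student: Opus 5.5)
This is a Riccati-type comparison argument based on the integral inequality \eqref{eq:parabolic-a-integral}. Set
\[
\phi(r)\defeq \frac{2}{\,r-r_1+\frac{2}{\ha(r_1)}\,}.
\]
It solves $\phi'=-\frac12\phi^2$ exactly, is positive and finite for all $r\ge r_1$, and satisfies $\phi(r_1)=\ha(r_1)$.

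The plan is to treat $\ha$ as an a.e.-defined function satisfying
\[
\ha(r_2)-\ha(r_1)\ge -\frac12\int_{r_1}^{r_2}\ha^2\,dr
\]
for regular values $r_1<r_2$. By the differentiability of $\mA_1$ recalled from \cite{Colding-Minicozzi2025}*{Appendix A}, $\ha$ extends to a function whose distributional derivative satisfies $\ha'\ge -\frac12\ha^2$. Concretely, the function
\[
h(r)\defeq \ha(r)+\frac12\int_{r_1}^{r}\ha^2\,dt
\]
is nondecreasing on regular values.

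First, I show that $\ha$ stays positive on $[r_1,\infty)$. Let $r^*$ be the infimum of regular values $r>r_1$ with $\ha(r)\le 0$. On $[r_1,r^*)$ the function $\ha$ is positive, so I can pass to $w\defeq 1/\ha$. Formally $w'=-\ha'/\ha^2\le \frac12$. In integral form, for regular values $r_1\le s<r<r^*$, the inequality \eqref{eq:parabolic-a-integral} applied on $[s,r]$ should give
\[
w(r)\le w(s)+\frac12(r-s).
\]
Hence $\ha(r)\ge \phi(r)>0$ there. By the monotonicity of $h$ and local boundedness of $\ha$ near $r^*$, $\ha$ cannot jump down to a nonpositive value at $r^*$. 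This forces $r^*=\infty$.

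Second, the bound $w(r)\le w(r_1)+\frac12(r-r_1)$ is exactly \eqref{eq:parabolic-a-lower} after inverting, since $w(r_1)=2/\phi(r_1)\cdot\frac12\cdot\ldots$; more precisely, $w(r_1)+\frac12(r-r_1)=\frac12\bigl(r-r_1+\frac{2}{\ha(r_1)}\bigr)=1/\phi(r)$.

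The main obstacle is justifying the passage from the integral inequality to the differential inequality for $1/\ha$ without assuming $\ha$ is $C^1$. To avoid differentiating, I would run a direct comparison instead. Suppose $\ha(r)<\phi(r)$ at some regular $r$, and let $s=\sup\{t\le r:\ha\ge\phi \text{ on regular values in }[r_1,t]\}$. On $[s,r]$ we have $0<\ha<\phi$; positivity here needs a separate argument if it fails. Then
\[
\ha(r)-\ha(s)\ge -\frac12\int_s^r\ha^2\ge -\frac12\int_s^r\phi^2=\phi(r)-\phi(s).
\]
Continuity of $\mA_1$ and $\mA_1'$ gives $\ha(s)\ge\phi(s)$, and hence $\ha(r)\ge\phi(r)$, a contradiction.

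The delicate point is the inequality $\ha^2\le\phi^2$, which requires $\ha\ge-\phi$ on that interval. I would secure this by a first-exit argument combined with the continuity of $\ha$, which comes from the regularity results for $\mA_1$ and $\mB_1$ cited from \cite{Colding-Minicozzi2025}*{Appendix A}.
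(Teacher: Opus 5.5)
Your strategy is the same as the paper's: you compare $\ha$ with the Riccati solution $\phi$ through \eqref{eq:parabolic-a-integral}, using continuity of $\ha$. However, neither of your two routes closes as written.

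\textbf{The direct comparison.} You take $s=\sup\{t\le r:\ \ha\ge\phi\text{ on }[r_1,t]\}$, which is a first-exit time, and then assert $0<\ha<\phi$ on $[s,r]$. That does not follow. At this stage nothing excludes $\ha$ dipping below $\phi$ just after $s$, going back above $\phi$ on a subinterval of $(s,r)$, and being below $\phi$ again at $r$. On that subinterval $\ha^2>\phi^2$, so the step $-\frac12\int_s^r\ha^2\ge-\frac12\int_s^r\phi^2$ fails even if $\ha>0$. Positivity on the long interval $[s,r]$ is likewise left to an unspecified ``first-exit argument.''

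\textbf{The $w=1/\ha$ route.} The inequality $w(r)\le w(s)+\frac12(r-s)$ does not come from one application of \eqref{eq:parabolic-a-integral} on $[s,r]$. It is exactly the corollary with base point $s$, so invoking it is circular unless you prove it. One way is to apply \eqref{eq:parabolic-a-integral} on a fine partition of $[s,r]$ by regular values. Uniform continuity of $\ha$ together with $\min_{[s,r]}\ha>0$ then gives $w(t_{i+1})-w(t_i)\le\frac12(1+\epsilon)(t_{i+1}-t_i)$, which you can sum.

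\textbf{The repair.} The simplest fix uses the \emph{last} touching point and a short interval, which also disposes of positivity. Given a regular $r$ with $\ha(r)<\phi(r)$, let $s'=\max\{t\in[r_1,r]:\ \ha(t)\ge\phi(t)\}$. By continuity $\ha(s')=\phi(s')>0$ and $\ha<\phi$ on $(s',r]$. Pick a regular $r'\in(s',r]$ so close to $s'$ that $\ha>0$ on $[s',r']$. Then $\ha^2\le\phi^2$ on $[s',r']$. Apply \eqref{eq:parabolic-a-integral} on $[s',r']$; if $s'$ is critical, approximate it by regular values from the right. This gives $\ha(r')\ge\ha(s')+\phi(r')-\phi(s')=\phi(r')$, contradicting $\ha(r')<\phi(r')$.

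The paper runs the same comparison with $\phi$, phrased through a ``first regular value $r_*$'' at which $\ha$ returns to $\phi$, and compares on $[r_1,r_*]$. It passes from $\ha<\phi$ to $\ha^2<\phi^2$ without comment. So your worry about needing $\ha\ge-\phi$ is well founded, and the last-touching-point choice above is the clean way to set the argument up.
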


\begin{proof}
Define
\[
\phi(r):=\frac{2}{\,r-r_1+\frac{2}{\ha(r_1)}\,}.
\]
Then \(\phi(r_1)=\ha(r_1)\) and
\[
\phi'(r)=-\frac12\,\phi(r)^2.
\]
Equivalently,
\[
\phi(r)-\phi(r_1)=-\frac12\int_{r_1}^r \phi(s)^2\,ds.
\]

We claim that \(\ha(r)\ge \phi(r)\) for all regular values \(r\ge r_1\). Suppose otherwise. Then, by continuity of \(\ha\), there exists a first regular value \(r_*>r_1\) such that
\[
\ha(r_*)=\phi(r_*),\qquad \ha(r)<\phi(r)\quad\text{for all regular }r\in(r_1,r_*).
\]
Using \eqref{eq:parabolic-a-integral} together with the inequality \(\ha<\phi\) on \((r_1,r_*)\), we obtain
\[
\ha(r_*)-\ha(r_1)\ge -\frac12\int_{r_1}^{r_*} \ha(s)^2\,ds
> -\frac12\int_{r_1}^{r_*}\phi(s)^2\,ds
=\phi(r_*)-\phi(r_1).
\]
Since \(\ha(r_1)=\phi(r_1)\) and \(\ha(r_*)=\phi(r_*)\), this is a contradiction. Therefore, \eqref{eq:parabolic-a-lower} holds.
\end{proof}

Similar to \cite{Colding-Minicozzi2025}, we now establish a quadratic lower bound for the growth of \(\mA_1(r)\).

\begin{proposition}\label{prop:parabolic-quadratic-growth}
Assume that \(r_1\) is a regular value such that
\[
\mB_1(r_1)=\mA_1'(r_1)>0.
\]
Then there exists a constant \(c>0\) such that
\begin{equation}\label{eq:parabolic-A1-quadratic}
\mA_1(r)\ge c\,r^2
\end{equation}
for all sufficiently large regular values \(r\).
\end{proposition}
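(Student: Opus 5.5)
Since \(\mA_1(r_1)>0\) (the level set \(\Xi_{r_1}\) is regular, so \(|\nabla u|>0\) on it), the hypothesis \(\mB_1(r_1)>0\) is the same as \(\ha(r_1)>0\). This puts us in the setting of Corollary~\ref{cor:parabolic-a-lower}, and the plan is simply to integrate the lower bound \eqref{eq:parabolic-a-lower} for \((\log\mA_1)'\).

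Set \(s_0:=2/\ha(r_1)>0\). By Corollary~\ref{cor:parabolic-a-lower}, for every regular value \(r\ge r_1\),
\[
(\log \mA_1)'(r)=\ha(r)\ge \frac{2}{r-r_1+s_0}.
\]
Next, we integrate from \(r_1\) to \(r\). The differentiability of \(\mA_1\) on regular values and its continuity across the measure-zero set of critical values are taken from \cite{Colding-Minicozzi2025}*{Appendix A}, so that \(\log\mA_1(r)-\log\mA_1(r_1)=\int_{r_1}^r\ha\). This integration gives
\[
\log\frac{\mA_1(r)}{\mA_1(r_1)}\ge 2\log\frac{r-r_1+s_0}{s_0},
\qquad\text{that is,}\qquad
\mA_1(r)\ge \frac{\mA_1(r_1)}{s_0^2}\,(r-r_1+s_0)^2 .
\]

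Finally, for \(r\ge 2r_1\) we have \(r-r_1+s_0\ge r/2\). Hence \(\mA_1(r)\ge c\,r^2\) with
\[
c=\frac{\mA_1(r_1)\,\ha(r_1)^2}{16}>0 .
\]

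The main obstacle is justifying the integration step, since \(\ha\) is a priori defined only at regular values. There are two ways to handle it. The first is to rely on the absolute continuity of \(\mA_1\) from the cited appendix. The alternative is to argue directly with the integral identity
\[
\mA_1(r)-\mA_1(r_1)=\int_{r_1}^r \mB_1(t)\,dt
\]
from Lemma~\ref{lem:parabolic-derivative-A}. In that identity \(\mB_1(t)\ge \phi(t)\,\mA_1(t)\) for a.e.\ \(t\), where \(\phi\) is the comparison function from Corollary~\ref{cor:parabolic-a-lower}, and a Grönwall-type comparison with \(\mA_1(r_1)(r-r_1+s_0)^2/s_0^2\) then yields the same bound.
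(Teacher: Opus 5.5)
Your proposal is correct and matches the paper's proof: you turn $\mB_1(r_1)>0$ into $\ha(r_1)>0$, apply Corollary~\ref{cor:parabolic-a-lower}, and integrate $(\log \mA_1)'=\ha$ from $r_1$ to $r$ to get $\mA_1(r)\ge \mA_1(r_1)\bigl((r-r_1+s_0)/s_0\bigr)^2$. Your explicit constant $c=\mA_1(r_1)\ha(r_1)^2/16$ for $r\ge 2r_1$, and your justification of integrating across critical values (via absolute continuity or the Gr\"onwall argument on $\mA_1(r)-\mA_1(r_1)=\int_{r_1}^r\mB_1$), are somewhat more careful than the paper, which integrates directly.
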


\begin{proof}
Since \(\mB_1(r_1)>0\), we have
\[
\ha(r_1)=\frac{\mA_1'(r_1)}{\mA_1(r_1)}>0.
\]
Therefore, by Corollary~\ref{cor:parabolic-a-lower},
\[
\ha(r)\ge \frac{2}{\,r-r_1+\frac{2}{\ha(r_1)}\,}
\]
for every regular value \(r\ge r_1\). Integrating this inequality, we obtain
\[
\log \frac{\mA_1(r)}{\mA_1(r_1)}
=
\int_{r_1}^r \ha(s)\,ds
\ge
\int_{r_1}^r \frac{2}{\,s-r_1+\frac{2}{\ha(r_1)}\,}\,ds
=
2\log\frac{r-r_1+\frac{2}{\ha(r_1)}}{\frac{2}{\ha(r_1)}}.
\]
Exponentiating yields
\[
\mA_1(r)\ge \mA_1(r_1)
\left(
\frac{r-r_1+\frac{2}{\ha(r_1)}}{\frac{2}{\ha(r_1)}}
\right)^2.
\]
Hence \(\mA_1(r)\ge c\,r^2\) for all sufficiently large \(r\), for some constant \(c>0\).
\end{proof}

\begin{remark}
    If $\mA_1(r)$ is twice differentiable, then there is a more direct proof of its quadratic growth through differential inequality. First we have
    \begin{equation}
       \mA_1''(r)=\mB_1'(r)=\int_{\Xi_r}
\frac12\left(
 \frac{|\nabla^2u|^2}{|\nabla u|^2}
+ R_g
- R_{\Xi_r}
\right)\,d\sigma.
    \end{equation}
    In particular, when \(R_g\ge 0\) on $E$, it holds that
\begin{equation}\label{eq:without-R-and-K}
    \mB_1'(r)\ge\frac12\int_{\Xi_r}\frac{|\nabla^2u|^2}{|\nabla u|^2}\,d\sigma\ge 0.
\end{equation}

Recall that
\[
\mA'_1(r)=\mB_1(r)=\int_{\Xi_r} \nabla^2 u (\boldsymbol{n},\boldsymbol{n})d\sigma.
\]
Cauchy--Schwarz gives
\[
\mB_1(r)^2
\le
\left(\int_{\Xi_r}|\nabla u|^2\,d\sigma\right)
\left(\int_{\Xi_r}\frac{\left(\nabla^2 u (\boldsymbol{n},\boldsymbol{n})\right)^2}{|\nabla u|^2}\,d\sigma\right).
\]
Since \(\left(\nabla^2 u (\boldsymbol{n},\boldsymbol{n})\right)^2\le |\nabla^2u|^2\), we have
\[
\mB_1(r)^2
\le
\mA_1(r)\int_{\Xi_r}\frac{|\nabla^2u|^2}{|\nabla u|^2}\,d\sigma.
\]
Using \eqref{eq:without-R-and-K}, we get
\[
\mA_1''(r)=\mB_1'(r)
\ge
\frac12\int_{\Xi_r}\frac{|\nabla^2u|^2}{|\nabla u|^2}\,d\sigma
\ge
\frac{\mB_1(r)^2}{2\mA_1(r)}.
\]
Since \(\mB_1(r)=\mA_1'(r)\), it becomes
\[
\mA_1''(r)\ge \frac{(\mA_1'(r))^2}{2\mA_1(r)}.
\]

Now compute
\[
(\sqrt{\mA_1})''(r)
=
\frac{\mA_1''(r)}{2\sqrt{\mA_1(r)}}-\frac{(\mA_1'(r))^2}{4\mA_1(r)^{3/2}}
=
\frac{2\mA_1(r)\mA_1''(r)-(\mA_1'(r))^2}{4\mA_1(r)^{3/2}}.
\]
Because
\[
2\mA_1(r)\mA_1''(r)-(\mA_1'(r))^2\ge 0,
\]
we conclude
\[
(\sqrt{\mA_1(r)})''\ge 0.
\]
In particular, \(\sqrt{\mA_1(r)}\) is convex. By convexity we have that 
\begin{equation}
    \sqrt{\mA_1}(r)\ge\sqrt{\mA_1}(r_0)+\frac{\mB_1(r_0)}{2\sqrt{\mA_1}(r_1)}(r-r_0),
\end{equation}
So, if $\mB_1(r_0)>0$, there exists some $c>0$ such that for $r\ge r_0$, 
\begin{equation}
    \mA_1(r)\ge cr^2.
\end{equation}
\end{remark}

\begin{lemma}
For all \(r\ge r_0\), we have \(\mB_1(r)\le 0\).
\end{lemma}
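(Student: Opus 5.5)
The plan is to argue by contradiction. Suppose there is a regular value \(r_1\ge r_0\) with \(\mB_1(r_1)>0\); if \(\mB_1\) is only known at regular values, the continuity of level set integrals recalled from \cite{Colding-Minicozzi2025}*{Appendix A} lets us pass to a nearby regular value. Proposition~\ref{prop:parabolic-quadratic-growth} then gives a constant \(c>0\) with
\[
\mA_1(r)\ge c\,r^2
\]
for all sufficiently large regular \(r\). So it suffices to prove that \(\mA_1(r)\) grows at most linearly in \(r\). The Ricci lower bound \eqref{eq:Ricci-lower-bound} is used only in this step.

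For the linear upper bound, I would use the Cheng--Yau gradient estimate. If \(w\) is a positive harmonic function on a geodesic ball \(B_1(x)\) with \(\Ric_g\ge K\), then
\[
|\nabla \log w|(x)\le C(K),
\]
where the constant depends only on \(K\) and the dimension \(3\). First I would check that this applies to \(u\) on \(\Xi_r\) for \(r\) large.
\begin{itemize}
\item The set \(N=\{x\in E: d_g(x,\partial E)\le 1\}\) is compact, since \(\partial E\) is compact.
\item Hence \(u\) is bounded on \(N\), say by \(r_2\).
\item For \(r>r_2\), every \(x\in\Xi_r\) satisfies \(B_1(x)\subset E\), and \(u>0\) there because \(u>0\) in the interior of \(E\) by the maximum principle.
\end{itemize}
Cheng--Yau then gives \(|\nabla u|\le C(K)\,u=C(K)\,r\) on \(\Xi_r\).

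Combining this with Lemma~\ref{lem:A0-constant}, for large regular \(r\) we get
\[
\mA_1(r)=\int_{\Xi_r}|\nabla u|^2\,d\sigma\le C(K)\,r\int_{\Xi_r}|\nabla u|\,d\sigma=C(K)\,\mA_0\,r.
\]
Here \(\mA_0\) is a finite constant. This is incompatible with \(\mA_1(r)\ge c\,r^2\) as \(r\to\infty\), which is the desired contradiction, so \(\mB_1(r)\le 0\) for all \(r\ge r_0\).

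The main obstacle is making sure the Cheng--Yau estimate applies uniformly on the whole level set. This needs positivity of \(u\) on a full unit ball around each point of \(\Xi_r\), which is exactly what the properness of \(u\) and the compactness of \(\partial E\) provide above. The estimate is also genuinely needed: without a Ricci lower bound, \(|\nabla u|\) could grow faster than \(u\), and \(\mA_1\) would have no a priori upper bound.
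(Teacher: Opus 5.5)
Your proposal is correct and follows the paper's proof. You assume \(\mB_1(r_1)>0\), invoke Proposition~\ref{prop:parabolic-quadratic-growth} to get \(\mA_1(r)\ge cr^2\), and contradict this with the linear bound \(\mA_1(r)\le \sup_{\Xi_r}|\nabla u|\,\mA_0\le Cr\), which comes from the Cheng--Yau estimate and Lemma~\ref{lem:A0-constant}. Your explicit check that \(B_1(x)\subset E\) for \(x\in\Xi_r\) with \(r\) large is just a more careful version of the paper's remark that \(d_g(\Xi_r,\Xi_0)\) stays bounded away from zero.
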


\begin{proof}
Suppose instead that there exists \(r_1\ge r_0\) such that
\[
\mB_1(r_1)>0.
\]
Then, by Lemma \ref{prop:parabolic-quadratic-growth}, there exists a constant \(c>0\) such that, for all \(r\ge r_0\),
\begin{equation}\label{eq:A1-at-least-quadratic}
\mA_1(r)\ge cr^2.
\end{equation}

On the other hand, by the Cheng--Yau gradient estimate,
\begin{equation*}
|\nabla u(x)|\le C(n)\left(\frac{1}{d_g(x,\partial E)}+\sqrt{|K|}\right)u(x).
\end{equation*}
Taking the supremum over the compact level set \(\Xi_r=\{u=r\}\), and noting that \(\partial E=\Xi_0\) (here we still regard \(u\) as a function on \(\overline E\)), we see that, as \(r\to\infty\), the distance \(d_g(\Xi_r,\Xi_0)\) is uniformly bounded away from \(0\). Therefore, for all sufficiently large \(r\),
\begin{equation}\label{eq:gradient-linear}
\sup_{x\in \Xi_r}|\nabla u(x)|\le Cr.
\end{equation}
Combining this with Lemma \ref{lem:A0-constant}, we obtain from \eqref{eq:gradient-linear} that
\begin{equation}\label{eq:A1-at-most-linear}
\mA_1(r)= \int_{\Xi_r}|\nabla u|^2 \, d\sigma
\le \sup_{x\in \Xi_r}|\nabla u(x)| \,\mA_0(r)\le Cr.
\end{equation}
For sufficiently large \(r\), \eqref{eq:A1-at-least-quadratic} contradicts \eqref{eq:A1-at-most-linear}. This completes the proof.
\end{proof}

We conclude the subsection by finishing the proof of Proposition \ref{prop:Parabolic-exhaustion}.
\begin{proof}[Proof of Proposition \ref{prop:Parabolic-exhaustion}]
Recall that we have assumed that none of the regular level set $\Sigma_r\cong\mb S^2$ when $r\ge r_0$, so it suffices to show $\Sigma_r\cong\mb T^2$. Following the computation in the previous lemmas, we now have
\[
\mA_1'(r)=\mB_1(r)\le 0 \qquad \text{for all } r\ge r_0.
\]

Since \(\mB_1\) is monotone nondecreasing and bounded above by \(0\), it admits a limit
\[
\ell\defeq\lim_{r\to\infty}\mB_1(r)\le 0.
\]
If \(\ell<0\), then there exists \(r_1\) sufficiently large such that, for all \(r\ge r_1\),
\begin{equation}\label{eq:B_1-strictly-negative}
\mB_1(r)\le \frac{\ell}{2}<0.
\end{equation}
Integrating \eqref{eq:B_1-strictly-negative} from \(r_1\) to \(r>r_1\), we obtain
\[
0\le \mA_1(r)\le \mA_1(r_1)+\frac{\ell}{2}(r-r_1)\to -\infty
\qquad \text{as } r\to\infty,
\]
which is impossible. Therefore,
\[
\lim_{r\to\infty}\mB_1(r)=0.
\]

By \eqref{eq:formula-of-B_1}, we obtain
\begin{align*}
-\mB_1(r)
&=\frac12\int_r^{\infty}\int_{\Xi_t}
\left(
\frac{|\nabla^2u|^2}{|\nabla u|^2}+R_g-R_{\Xi_t}
\right)\,d\sigma\,dt\\
&=\frac12\int_r^{\infty}
\left(
\int_{\Xi_t}
\left(
\frac{|\nabla^2u|^2}{|\nabla u|^2}+R_g
\right)\,d\sigma
+4\pi(\mathrm{genus}(\Xi_t)-1)
\right)\,dt.
\end{align*}
Since \(\mB_1(r)\to 0\), we may choose \(r\) sufficiently large so that
\[
-\mB_1(r)<1.
\]
It follows that
\[
\frac12\int_r^{\infty}
\left(
\int_{\Xi_t}
\left(
\frac{|\nabla^2u|^2}{|\nabla u|^2}+R_g
\right)\,d\sigma
+4\pi(\mathrm{genus}(\Xi_t)-1)
\right)\,dt<1.
\]
Since \(\mathrm{genus}(\Xi_t)\ge 1\) for every regular \(t\), the integrand is nonnegative. In particular, we must have
\[
\mathrm{genus}(\Xi_t)=1
\]
for all sufficiently large regular values \(t\). Hence every such \(\Xi_t\) is a torus. Therefore, \(M\) admits an exhaustion with torus boundaries, as desired.
\end{proof}

\subsection{Nonparabolic case}
In this case, rather than considering a harmonic function on the end, we work with the minimal Green's function $G$ on $M\setminus\{p\}$, where $p\in M$ is the pole. The function $G$ is positive and harmonic on $M\setminus\{p\}$. Motivated by the behavior of the Green's function on Euclidean space $\R^3$, we define the \emph{Green distance function} by
\[
b \defeq \frac{1}{G}.
\]
Then $b$ is positive and satisfies
\[
\Delta b^2 = 6|\nabla b|^2.
\]
We also define the trace-free part of the hessian of $b^2$ as
\begin{align*}
    \boldsymbol{\mathrm{B}}\defeq \nabla^2 b^2-2|\nabla b|^2g.
\end{align*}

\begin{proposition}\label{prop:Nonparabolic-exhaustion}
    Let $(M,g)$ be a complete Riemannian $3$-manifold with one end, finite first Betti number and $R_g\ge 0$. If $(M,g)$ is nonparabolic and satisfies the regularity assumptions \eqref{eq:Ricci-lower-bound} and \eqref{eq:Green-vanish}, then $M$ admits an exhaustion $\Omega_i$ such that for all $i$, we have  $\partial\Omega_i\cong \mb S^2$.
\end{proposition}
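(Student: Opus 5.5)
We mirror the parabolic argument, now with the Green distance $b=G^{-1}$ and the quantity
\[
A_1(r)=r^{-2}\int_{\{b=r\}}|\nabla b|^2\,d\sigma .
\]

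\textbf{Setup.} By \eqref{eq:Green-vanish}, $b$ is proper on $M\setminus\{p\}$ and $b\to 0$ at the pole. Hence almost every level set $\Sigma_r=\{b=r\}$ is a smooth closed surface. By Lemma~\ref{lem:levelset-connected}, $\Sigma_r$ is connected for $r$ large, and it is oriented as a level set. The sublevel sets $\Omega_r=\{b<r\}$ exhaust $M$.

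We argue by contradiction. Suppose there is $r_0$ such that $\mathrm{genus}(\Sigma_r)\ge 1$ for every regular $r\ge r_0$. Otherwise there are arbitrarily large regular $r$ with $\Sigma_r\cong\mathbb S^2$, and these give the desired exhaustion.

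\textbf{Step 1: monotonicity identity.} Using $\Delta b=2|\nabla b|^2/b$, i.e.\ $\Delta b^2=6|\nabla b|^2$, together with the constancy of $\int_{\Sigma_r}|\nabla G|=4\pi$ (flux), we compute $A_1'(r)$ as in Colding--Minicozzi \cite{Colding-Minicozzi2025}. On each regular level set, combine the Bochner formula with the Gauss equation, exactly as in the derivation of \eqref{eq:formula-of-B_1}. This gives an identity of the form
\[
\frac{d}{dr}\Big(\text{first-order quantity in }A_1\Big)=\int_{\Sigma_r}\Big(\tfrac{|\mathbf B|^2}{c\,b^{k}|\nabla b|}+\tfrac12 R_g|\nabla b|\ldots\Big)-2\pi\chi(\Sigma_r)\cdot(\text{weight}),
\]
up to explicit weights. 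This is the Munteanu--Wang/Colding--Minicozzi formula. In the sphere case it yields only the $\chi=2$ contribution $4\pi$. Under our genus assumption the Euler-characteristic term has the favorable sign, $-2\pi\chi\ge 0$.

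\textbf{Step 2: improved growth.} Drop $R_g\ge0$ and the Gauss--Bonnet term. Apply Cauchy--Schwarz to $\mathbf B(\mathbf n,\mathbf n)$ against $A_1$, as in Lemma~\ref{lem:parabolic-derivative-a}. This yields a Riccati-type integral inequality for $(\log A_1)'$, analogous to \eqref{eq:parabolic-a-integral}.

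We then split into two cases:
\begin{itemize}
\item If the relevant derivative becomes positive at some regular $r_1$, a comparison argument as in Corollary~\ref{cor:parabolic-a-lower} and Proposition~\ref{prop:parabolic-quadratic-growth} forces superlinear growth: $r^2A_1(r)\ge c\,r^{2+\epsilon}$, or at least faster than linear.
\item Otherwise the derivative is nonpositive for $r\ge r_0$, it is monotone, and its limit must be $0$. Integrating the identity from $r$ to $\infty$ then gives finiteness of $\int_r^\infty 4\pi(\mathrm{genus}-1)\,(\text{weight})\,dt$. In the nonparabolic scaling the weight does not decay integrably: in Colding--Minicozzi the sphere contribution exactly balances the Euclidean model. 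The genus $\ge1$ assumption therefore removes the positive $4\pi$ contribution that the Euclidean model needs, and this case is contradictory as well.
\end{itemize}

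\textbf{Step 3: upper bound.} The Cheng--Yau gradient estimate, using \eqref{eq:Ricci-lower-bound}, gives $|\nabla G|\le C(1/d+\sqrt{|K|})\,G$ away from $p$. Hence $|\nabla b|=|\nabla G|/G^2\le C\,b$ on $\Sigma_r$ for $r$ large. Combined with the constant flux, this gives
\[
\int_{\Sigma_r}|\nabla b|^2=r^2\int_{\Sigma_r}|\nabla G|\,\frac{|\nabla b|}{r^2}\le C r^{\,\cdot}\cdot 4\pi ,
\]
so $A_1(r)\le Cr$. This contradicts the superlinear growth of Step 2. Having ruled out both cases, we conclude that $\mathrm{genus}(\Sigma_r)=0$ for a sequence of regular $r\to\infty$. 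Together with connectedness, these level sets are spheres and give the exhaustion.

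\textbf{Main obstacle.} The hardest part is getting the precise weights in the Step 1 identity. Unlike the parabolic case, the Gauss--Bonnet term carries an $r$-dependent factor, and the model case ($\mathbb R^3$) saturates the inequality. So I must verify that $\chi\le0$ strictly improves the differential inequality, enough to beat the linear upper bound of Step 3, in both the ``positive derivative'' and ``nonpositive derivative'' regimes. My first attempt would be to work with $b\,A_1'(r)$, or with $(\log A_1)'$ rescaled by $r$. I would then redo the comparison ODE with $\phi'=-\tfrac12\phi^2+(\text{positive }\chi\text{-gain})/r$.
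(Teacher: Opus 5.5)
\textbf{Gap in Step~2.} Your outer frame matches the paper's: a contradiction hypothesis on the genus of all large regular level sets, a lower growth bound for $A_1$, and a Cheng--Yau upper bound. Your Step~3 is fine. From $|\nabla b|\le Cb$ and the constant flux $\int_{\Sigma_r}|\nabla G|=4\pi$ you get $A_1(r)\le Cr$, which is sharper than the paper's $A_1=o(r^2)$.

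The gap is Step~2, which you leave unspecified and model on the parabolic dichotomy. That dichotomy does not transfer. Carrying out your Step~1 gives exactly
\[
\frac{d}{dr}\bigl(rB_1-4rA_1\bigr)=B_2+S_1-4\pi\chi(\Sigma_r),\qquad rB_1-4rA_1=2rA_1(a-1),\quad a=r(\log A_1)'.
\]
The Gauss--Bonnet weight is $1$. Consequences for your two cases:
\begin{itemize}
\item In your second case, integrability of $8\pi(\mathrm{genus}-1)$ forces only genus $1$, exactly as in the parabolic case. So tori are not excluded.
\item Nothing forces the limit to be $0$ either. The natural monotone quantity is $rB_1-4rA_1$, and its nonpositivity just says $a\le 1$. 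That is perfectly compatible with $A_1\le Cr$.
\item Your sentence that the genus assumption ``removes the $4\pi$ the Euclidean model needs'' is precisely what has to be proved.
\item In your first case, the target $r^2A_1\ge c\,r^{2+\epsilon}$ does not beat $A_1\le Cr$. You need $A_1$ itself to be superlinear.
\end{itemize}

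\textbf{The missing idea} is the Colding--Minicozzi Riccati inequality for $a$. The paper uses the vector field $V=b^{-2}\nabla|\nabla b^2|/A_1$, for which $\int_{\Sigma_r}\langle V,\mathbf n\rangle=2a+2$. After Cauchy--Schwarz (using that $\mathbf B$ is trace-free) and discarding $S_1\ge 0$ and $-4\pi\chi\ge 0$, one gets
\[
r\int_{\Sigma_r}\frac{\operatorname{div}V}{|\nabla b|}\ge 2-\tfrac12 a^2,
\]
that is, $ra'\ge 1-\tfrac{a^2}{4}$ in integrated form. This also follows from the identity above: its left side equals $2A_1(ra'+a^2-1)$, and $B_2\ge\tfrac32 a^2A_1$.

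The constant $1$ is structural. For spheres the Gauss--Bonnet term contributes $-4\pi/A_1$, which equals $-1$ at the Euclidean model $A_1=4\pi$ and cancels it exactly. That is the precise sense in which $\chi\le 0$ helps.

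The equilibrium of this inequality is therefore $a=2$, not $0$ as in the parabolic case. The comparison in Corollary~\ref{coro:monotone-a} gives $a\ge 2-c/\sqrt r$, hence $A_1\ge cr^2$, with no case split on the sign of $A_1'$. This contradicts the Cheng--Yau upper bound. Without this inequality, your proposal has no mechanism that rules out tori.
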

It is easy to see that, whenever \eqref{eq:Green-vanish} holds, the positive level sets of $G$, and hence those of $b$, are compact. Equivalently, both $G$ and $b$ are proper functions on $M\setminus\{p\}$.

We now refine several main results from the work of Colding--Minicozzi \cite{Colding-Minicozzi2025}. Under our assumptions, there exists $r_0>0$ such that, for every regular value $r\ge r_0$, the level set
\[
\Sigma_r:=\{b=r\}
\]
is a smooth, closed, connected, oriented surface of genus at least one.

We begin by recalling some local quantities:
\begin{align*}
A_1(r) &= r^{-2}\int_{ \{b=r\} } |\nabla b|^2,\quad B_1(r) = r^{-2}\int_{\{b=r\}} \nabla^2 b^2(\boldsymbol{n},\boldsymbol{n}),\\
B_2(r) &= \int_{\{b=r\}} \frac{|\boldsymbol{\mathrm{B}}|^2}{|\nabla b^2|^2}, \quad S_1(r) = \int_{\{b=r\}} R_g.
\end{align*}

\begin{lemma}
The function $A_1$ is continuously differentiable and satisfies
\begin{equation}
rA_1'(r)=\frac{1}{2}B_1(r)-A_1(r)
=\frac{1}{2}r^{-2}\int_{b=r}\boldsymbol{\mathrm{B}}(\boldsymbol{n},\boldsymbol{n}),
\end{equation}
as well as
\begin{equation}
2\bigl(rA_1(r)\bigr)'=B_1(r).
\end{equation}
\end{lemma}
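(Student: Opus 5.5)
Set $I(r):=\int_{\{b=r\}}|\nabla b|^2\,d\sigma$, so that $A_1(r)=r^{-2}I(r)$. The plan is to compute $I'(r)$ in the same way as Lemma~\ref{lem:parabolic-derivative-A}, and then rewrite the result in terms of $b^2$. Write $\boldsymbol{n}=\nabla b/|\nabla b|$ and $I(r)=\int_{\{b=r\}}\langle |\nabla b|\nabla b,\boldsymbol{n}\rangle\,d\sigma$. For regular values $r_1<r_2$, apply the divergence theorem on $\{r_1\le b\le r_2\}$; this region is compact and avoids $p$ because $b$ is proper by \eqref{eq:Green-vanish}. Then use the coarea formula. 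Since $b=G^{-1}$ with $\Delta G=0$, we have $\Delta b=2|\nabla b|^2/b$. Hence
\[
\divsymb(|\nabla b|\nabla b)=\langle\nabla|\nabla b|,\nabla b\rangle+|\nabla b|\Delta b
=|\nabla b|\Bigl(\nabla^2 b(\boldsymbol{n},\boldsymbol{n})+\tfrac{2}{b}|\nabla b|^2\Bigr).
\]
The coarea formula then gives
\[
I'(r)=\int_{\{b=r\}}\nabla^2 b(\boldsymbol{n},\boldsymbol{n})\,d\sigma+\frac{2}{r}I(r).
\]
Continuity of this expression in $r$, including across critical values, is taken from \cite{Colding-Minicozzi2025}*{Appendix A}, as the paper indicates. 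That gives the $C^1$ claim.

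Next, express everything through $b^2$. We have $\nabla^2 b^2=2b\nabla^2 b+2\,db\otimes db$, so on $\{b=r\}$
\[
\nabla^2 b^2(\boldsymbol{n},\boldsymbol{n})=2r\,\nabla^2 b(\boldsymbol{n},\boldsymbol{n})+2|\nabla b|^2,
\qquad\text{and hence}\qquad
B_1(r)=\frac{2}{r}\int_{\{b=r\}}\nabla^2 b(\boldsymbol{n},\boldsymbol{n})+2A_1(r).
\]
Now compute $rA_1'(r)=r\bigl(r^{-2}I\bigr)'=r^{-1}I'(r)-2A_1(r)$. Substituting the formula for $I'$ gives
\[
rA_1'(r)=r^{-1}\int_{\{b=r\}}\nabla^2 b(\boldsymbol{n},\boldsymbol{n})+2A_1-2A_1=\tfrac12 B_1(r)-A_1(r),
\]
which is the first identity.

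For the trace-free expression, note that $\boldsymbol{\mathrm{B}}(\boldsymbol{n},\boldsymbol{n})=\nabla^2b^2(\boldsymbol{n},\boldsymbol{n})-2|\nabla b|^2$. Therefore
\[
\tfrac12 r^{-2}\int_{\{b=r\}}\boldsymbol{\mathrm{B}}(\boldsymbol{n},\boldsymbol{n})=\tfrac12B_1-A_1 .
\]
Finally, $2(rA_1)'=2A_1+2rA_1'=2A_1+B_1-2A_1=B_1$, which is the second identity.

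The only real obstacle is the justification at critical values and the differentiability of level-set integrals. For regular values everything above is direct. For the general $C^1$ statement I would rely on the integrated form $I(r_2)-I(r_1)=\int_{r_1}^{r_2}(\cdots)\,dt$, which holds for all $r_1<r_2$ by approximation. I would then use the continuity of the integrand in $t$ from \cite{Colding-Minicozzi2025}*{Appendix A} to conclude that $I$, and hence $A_1$, is $C^1$ with the stated derivative.
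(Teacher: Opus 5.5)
Your proposal is correct and follows essentially the paper's approach. The paper states this lemma without proof, citing Colding--Minicozzi, but it proves the parabolic analogue, Lemma~\ref{lem:parabolic-derivative-A}, by the same divergence-theorem-plus-coarea computation on $|\nabla b|\nabla b$, and likewise defers regularity across critical values to \cite{Colding-Minicozzi2025}*{Appendix A}. Your algebra via $\Delta b = 2|\nabla b|^2/b$ and $\nabla^2 b^2 = 2b\nabla^2 b + 2\,db\otimes db$ checks out, giving $rA_1' = r^{-1}\int_{\{b=r\}}\nabla^2 b(\boldsymbol{n},\boldsymbol{n}) = \tfrac12 B_1 - A_1$ and hence both identities.
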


The following proposition follows from a slight modification of the proof of \cite{Colding-Minicozzi2025}*{Proposition~1.30}, together with our assumption that each level set $\Sigma_r$ has genus at least one.

\begin{proposition}[\cite{Colding-Minicozzi2025}*{Proposition~1.30}]\label{prop:first-derivative-A}
For every regular value $r>r_0$, we have
\begin{equation}
rB_1(r)\ge 4rA_1(r)+\int_{r_0}^r \bigl(S_1(s)+B_2(s)\bigr)\,ds.
\end{equation}
Equivalently,
\begin{equation}
rA_1'(r)\ge A_1(r)+\frac{1}{2r}\int_{r_0}^r \bigl(S_1(s)+B_2(s)\bigr)\,ds.
\end{equation}
\end{proposition}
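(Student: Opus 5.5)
The plan is to follow the structure of \cite{Colding-Minicozzi2025}*{Proposition~1.30}: derive a first-order differential inequality for
\[
\Phi(r)\defeq rB_1(r)-4rA_1(r),
\]
and then integrate it over the interval on which every level set has genus at least one. By the lemma just stated, $rA_1'=\tfrac12B_1-A_1$, so the two displayed inequalities in the Proposition are equivalent. It therefore suffices to prove $\Phi(r)\ge \int_{r_0}^r(S_1+B_2)\,ds$.

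\textbf{Step 1: a divergence identity for $B_1$.} Rewrite $B_1$ as a flux. With $\boldsymbol n=\nabla b/|\nabla b|$,
\[
r^2B_1(r)=\int_{\Sigma_r}\nabla^2b^2(\boldsymbol n,\boldsymbol n)=\int_{\Sigma_r}\Big\langle \frac{\nabla|\nabla b^2|^2}{2|\nabla b^2|},\boldsymbol n\Big\rangle .
\]
Apply the divergence theorem between two regular values and use the coarea formula, exactly as in Lemma~\ref{lem:parabolic-derivative-A} and the computation of $\mB_1$. This expresses $(r^2B_1)'$ as $\int_{\Sigma_r}\Delta(\cdot)/|\nabla b|$ of the corresponding quantity.

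\textbf{Step 2: Bochner and Gauss--Codazzi.} Compute that Laplacian using the Bochner formula for $b^2$ together with $\Delta b^2=6|\nabla b|^2$. Then trade $\Ric(\boldsymbol n,\boldsymbol n)$ for $\tfrac12(R_g-R_{\Sigma_r})$ plus second fundamental form terms via Gauss--Codazzi, as in the parabolic lemma above. The second fundamental form of $\Sigma_r$ is expressed through $\boldsymbol{\mathrm B}$ and $\nabla b$. After regrouping, the pointwise terms should assemble into three pieces:
\begin{enumerate}[label=(\roman*)]
\item the nonnegative quantity $|\boldsymbol{\mathrm B}|^2/|\nabla b^2|^2$, which produces $B_2$;
\item $R_g$, which produces $S_1$;
\item $-R_{\Sigma_r}$, plus terms that are multiples of $|\nabla b|^2$ and $\boldsymbol{\mathrm B}(\boldsymbol n,\boldsymbol n)$; these recombine into $A_1$ and $B_1$.
\end{enumerate}
Collecting the powers of $r$, the expected outcome is
\[
\Phi'(r)= S_1(r)+B_2(r)+(\text{nonneg.})-c\int_{\Sigma_r}R_{\Sigma_r}
\]
for a positive constant $c$ (with $c=\tfrac12$ by analogy with the parabolic case). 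The bookkeeping that turns the $A_1,B_1$ terms into exactly $\Phi'$ is the routine but error-prone part of the argument. I would check it against the Euclidean model $b=|x|$, where $\Phi\equiv0$, $\boldsymbol{\mathrm B}=0$ and $R_g=0$.

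\textbf{Step 3: Gauss--Bonnet and integration.} For regular $r\ge r_0$, the standing assumption gives $\operatorname{genus}(\Sigma_r)\ge1$. Since $\Sigma_r$ is connected and orientable, Gauss--Bonnet yields $\int_{\Sigma_r}R_{\Sigma_r}=4\pi\chi(\Sigma_r)\le0$. Hence $\Phi'\ge S_1+B_2$ on $[r_0,\infty)$, and integrating gives
\[
\Phi(r)\ge \Phi(r_0)+\int_{r_0}^r(S_1+B_2)\,ds .
\]
This is exactly where the argument departs from Colding--Minicozzi. There only $\chi\le2$ is available, which produces a negative Euler-characteristic error term; here the genus assumption makes that term nonnegative, so it can simply be discarded.

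\textbf{Main obstacle: the boundary term $\Phi(r_0)\ge0$.} This step is needed to drop $\Phi(r_0)$, and it is the one I am least sure of. My first attempt is to invoke the original Colding--Minicozzi inequality at $r=r_0$. That inequality is obtained by integrating the same identity from the pole, where $b$ is asymptotically Euclidean and $\Phi(0^+)=0$, using only connectedness of level sets (Lemma~\ref{lem:levelset-connected}), i.e.\ $\chi\le2$. If their normalization makes the sphere contributions exactly cancel against the Euclidean leading terms, so that their statement reads $\Phi(r)\ge\int_0^r(S_1+B_2)\ge0$, then $\Phi(r_0)\ge0$ follows immediately.

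If that fails, the fallback is to enlarge $r_0$. Since the genus assumption holds for all large regular values, it suffices to find some large regular $r_0$ with $\Phi(r_0)\ge0$, i.e.\ $B_1(r_0)\ge4A_1(r_0)$, equivalently $r_0A_1'(r_0)\ge A_1(r_0)$. Suppose instead $\Phi<0$ for all large $r$. Then $\Phi'\ge0$ makes $\Phi$ increase to a finite limit $\ell\le 0$, so
\[
2r^2A_1'-2rA_1=\Phi\le0 \quad\Longrightarrow\quad \Big(\frac{A_1}{r}\Big)'\le0 .
\]
I would then try to derive a contradiction from this together with the Cheng--Yau bound on $|\nabla b|$, as in the parabolic case. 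I expect the resulting statement to hold only after possibly enlarging $r_0$, which is harmless for the later application.
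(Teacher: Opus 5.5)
Your Steps 1--3 follow the route the paper intends: it only cites Colding--Minicozzi's Proposition~1.30, with $\chi(\Sigma_r)\le 0$ replacing $\chi(\Sigma_r)\le 2$. Those steps are sound. The bookkeeping in fact gives the exact identity $\Phi'(r)=S_1(r)+B_2(r)-\int_{\Sigma_r}R_{\Sigma_r}=S_1(r)+B_2(r)-4\pi\chi(\Sigma_r)$, so $c=1$ and there is no extra nonnegative term. This yields $\Phi(r)\ge \Phi(r_0)+\int_{r_0}^r(S_1+B_2)\,ds$. Since $\Phi=2rA_1(a-1)$ is continuous, the stated inequality for all regular $r>r_0$ is \emph{equivalent} to $\Phi(r_0)\ge 0$, i.e.\ to $a(r_0)\ge 1$. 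That is the genuine gap.

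Your first attempt to fill it rests on a false computation. For $b=|x|$ on $\mathbb{R}^3$ one has $A_1\equiv 4\pi$ and $B_1\equiv 8\pi$, so $\Phi(r)=-8\pi r$, not $0$; this is consistent with $\Phi'=-4\pi\chi=-8\pi$. Near any pole the level sets are almost round spheres, so $\Phi(0^+)=0$ and $\Phi$ initially decreases at rate about $8\pi$. Integrating from the pole with $\chi\le 2$ therefore gives only $\Phi(r_0)\ge -8\pi r_0+\int_0^{r_0}(S_1+B_2)\,ds$, which may well be negative. No sphere-case inequality of the form $\Phi(r)\ge\int_0^r(S_1+B_2)\ge 0$ can hold, since flat $\mathbb{R}^3$ violates it.

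Your fallback does not close the gap either. Assuming $\Phi<0$ for all large $r$ gives $(A_1/r)'=\Phi/(2r^3)\le 0$, i.e.\ $A_1\le Cr$. That is an upper bound, and Cheng--Yau together with \eqref{eq:Green-vanish} also gives only upper bounds. Indeed, $A_1(r)=\int_{\Sigma_r}|\nabla G|\,|\nabla b|\le r^2\sup_{\Sigma_r}|\nabla G|\int_{\Sigma_r}|\nabla G|$, and $|\nabla G|\le CG$ already gives $A_1\le Cr$. So nothing is contradicted. What you would need is a lower bound on the growth of $A_1$, and that is exactly what this proposition is supposed to supply later (via $a\ge 1$ and then $a\to 2$), so the argument is circular.

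As it stands, you have proved the inequality with an additional constant $\Phi(r_0)=r_0B_1(r_0)-4r_0A_1(r_0)$, of undetermined sign, on the right-hand side. The paper's citation-based justification is silent on this constant as well. The genus assumption only controls $\Phi'$ on $(r_0,\infty)$. The clean form needs an extra input, such as a regular $r_1\ge r_0$ with $a(r_1)\ge 1$, after which $r_0$ could be replaced by $r_1$. Neither your proposal nor the paper provides this.
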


Next, define the continuous positive function
\[
a(r)=r(\log A_1)'(r),
\]
which measures the rate of polynomial growth of $A_1(r)$. To show that $A_1$ grows quadratically, we will prove that $a(r)$ converges rapidly to $2$ as $r\to\infty$.

A direct computation, together with our assumption that the level set $\Sigma_r$ has genus at least one, yields the following lemma.

\begin{lemma}[\cite{Colding-Minicozzi2025}*{Lemma 2.15}]
Define a $C^1$ vector field by
\begin{equation}
V=\frac{b^{-2}\nabla |\nabla b^2|}{A_1(r)}.
\end{equation}
Then, at each regular value $r$, we have
\begin{equation}
\int_{b=r} \langle V,\boldsymbol{n}\rangle = 2a(r)+2,
\end{equation}
and
\begin{equation}
r\int_{b=r} \frac{\operatorname{div}V}{|\nabla b|}
\ge 2-\frac{1}{2}a(r)^2.
\end{equation}
\end{lemma}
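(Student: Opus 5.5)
The plan is a direct computation on a regular level set $\Sigma_r=\{b=r\}$, following \cite{Colding-Minicozzi2025}*{Lemma~2.15} but replacing the Gauss--Bonnet contribution $8\pi$ of a sphere by the nonnegative quantity $-\int_{\Sigma_r}R_{\Sigma_r}\ge 0$, which is available because $\mathrm{genus}(\Sigma_r)\ge1$. This parallels Lemma~\ref{lem:parabolic-derivative-a}. Throughout we write $f=b^2$, so $|\nabla f|=2b|\nabla b|$ and $\boldsymbol n=\nabla f/|\nabla f|$. We also read $A_1(r)$ in the definition of $V$ as $A_1(b)$, so that $V$ is a genuine vector field.

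\emph{Flux identity.} Since $\nabla f=|\nabla f|\boldsymbol n$, we have $\langle\nabla|\nabla f|,\boldsymbol n\rangle=\nabla^2 f(\boldsymbol n,\boldsymbol n)$. Hence
\[
\int_{\Sigma_r}\langle V,\boldsymbol n\rangle=\frac{r^{-2}\int_{\Sigma_r}\nabla^2 b^2(\boldsymbol n,\boldsymbol n)}{A_1(r)}=\frac{B_1(r)}{A_1(r)}.
\]
The preceding lemma gives $B_1=2(rA_1)'=2A_1+2rA_1'$. Dividing by $A_1$ turns this into $2+2a(r)$, which is the first claim.

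\emph{Divergence inequality, lower-order terms.} Write $V=\psi(b)\nabla|\nabla f|$ with $\psi(s)=s^{-2}/A_1(s)$. Then
\[
\operatorname{div}V=\psi\,\Delta|\nabla f|+\psi'(b)\langle\nabla b,\nabla|\nabla f|\rangle,
\qquad \psi'/\psi=-\tfrac{2}{b}-\tfrac{A_1'}{A_1}.
\]
On $\Sigma_r$ we have $\langle\nabla b,\nabla|\nabla f|\rangle/|\nabla b|=\nabla^2 f(\boldsymbol n,\boldsymbol n)$. Integrating, multiplying by $r$, and using the flux identity, the $\psi'$ term contributes exactly $-(2+a)(2+2a)$.

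\emph{Divergence inequality, main term (the main obstacle).} It remains to bound $r\psi(r)\int_{\Sigma_r}\Delta|\nabla f|/|\nabla b|$ from below. I would proceed in the following order.
\begin{enumerate}
\item Write $\Delta|\nabla f|=\bigl(\tfrac12\Delta|\nabla f|^2-|\nabla|\nabla f||^2\bigr)/|\nabla f|$.
\item Apply Bochner to $f$: $\tfrac12\Delta|\nabla f|^2=|\nabla^2 f|^2+\langle\nabla f,\nabla\Delta f\rangle+\Ric(\nabla f,\nabla f)$. Here $\Delta f=6|\nabla b|^2=\tfrac32|\nabla f|^2/f$, so $\langle\nabla f,\nabla\Delta f\rangle$ is explicit in terms of $\nabla^2 f(\boldsymbol n,\boldsymbol n)$ and $|\nabla f|^2/f$.
\item Express $\Ric(\boldsymbol n,\boldsymbol n)$ by the traced Gauss equation, $2\Ric(\boldsymbol n,\boldsymbol n)=R_g-R_{\Sigma}+H^2-|A|^2$, where the second fundamental form is $A=\nabla^2 f|_{T\Sigma}/|\nabla f|$ and $H=(\Delta f-\nabla^2 f(\boldsymbol n,\boldsymbol n))/|\nabla f|$.
\item Decompose $\nabla^2 f=\mathrm B+2|\nabla b|^2g$ to rewrite everything through $\mathrm B$.
\end{enumerate}
I expect the outcome, integrated over $\Sigma_r$, to be
\[
\tfrac12\int\frac{|\mathrm B|^2}{|\nabla f|^2}|\nabla f|+\tfrac12\int R_g|\nabla f|-\tfrac12\int R_\Sigma|\nabla f|
\]
plus explicit terms linear and quadratic in $\mathrm B(\boldsymbol n,\boldsymbol n)$ and in $|\nabla b|^2$, after dividing by $|\nabla b|$ and accounting for the factor $2b$.
\begin{itemize}
\item The term with $R_g$ is dropped since $R_g\ge0$.
\item The term with $-R_\Sigma$ is dropped by Gauss--Bonnet and $\mathrm{genus}\ge1$, exactly as in the parabolic case. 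Since $|\nabla f|/|\nabla b|=2r$ is constant on $\Sigma_r$, it becomes a constant multiple of $-\int_{\Sigma_r}R_{\Sigma_r}\ge0$.
\item The remaining pieces are controlled via $\int_{\Sigma_r}|\nabla f|^2=4r^4A_1$ and $\int_{\Sigma_r}\mathrm B(\boldsymbol n,\boldsymbol n)=2r^2\cdot rA_1'=2r^3A_1a$.
\item Cauchy--Schwarz gives $\int\mathrm B(\boldsymbol n,\boldsymbol n)^2/|\nabla f|^2\ge(\int\mathrm B(\boldsymbol n,\boldsymbol n))^2/\int|\nabla f|^2=r^2A_1a^2$, and we use $|\mathrm B|^2\ge\tfrac32\mathrm B(\boldsymbol n,\boldsymbol n)^2$ since $\mathrm B$ is traceless.
\end{itemize}

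\emph{Conclusion.} After normalising by $r\psi(r)=r^{-1}/A_1$, everything should be a quadratic polynomial in $a$. Combined with $-(2+a)(2+2a)$, it should reduce to $2-\tfrac12a^2$. The delicate point is tracking constants carefully so that the linear terms in $a$ cancel and the quadratic coefficient is $-\tfrac12$. If the direct bookkeeping is messy, I would instead redo the computation with $G$ harmonic, where $|\nabla f|=2|\nabla G|/G^3$, so that Bochner applies to a harmonic function as in Lemma~\ref{lem:parabolic-derivative-a}, and convert back to $b$.
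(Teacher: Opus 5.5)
Your approach is the same as the paper's. The paper does no computation of its own: it defers to Colding--Minicozzi's direct computation, with the genus assumption making $-\int_{\Sigma_r}R_{\Sigma_r}=-4\pi\chi(\Sigma_r)\ge 0$ droppable in place of the sphere's $-8\pi$. Your flux identity and your $\psi'$ contribution $-(2+a)(2+2a)$ are correct.

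There is one slip to fix. Since $a=rA_1'/A_1$, we have $rA_1'=aA_1$, so
\[
\int_{\Sigma_r}\mathrm{B}(\boldsymbol n,\boldsymbol n)=2r^2\cdot rA_1'=2r^2A_1a,
\]
not $2r^3A_1a$. Cauchy--Schwarz then gives $\int_{\Sigma_r}\mathrm{B}(\boldsymbol n,\boldsymbol n)^2/|\nabla f|^2\ge A_1a^2$, hence $B_2\ge\tfrac32A_1a^2$. With your extra factor $r^2$, the final expression could not be a polynomial in $a$.

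With this correction, the bookkeeping you left open closes. Write $|\nabla^2 f|^2=|\mathrm{B}|^2+12|\nabla b|^4$; then the $|\nabla f|^4/f^2$ terms cancel. Bochner together with the traced Gauss equation gives, on $\Sigma_r$,
\[
\Delta|\nabla f|=\frac{|\mathrm{B}|^2}{2|\nabla f|}+\frac{3|\nabla f|\,\nabla^2 f(\boldsymbol n,\boldsymbol n)}{2f}+\frac{|\nabla f|}{2}\bigl(R_g-R_{\Sigma_r}\bigr).
\]
Hence the main term equals $\bigl(B_2+S_1-4\pi\chi(\Sigma_r)\bigr)/A_1+6+6a$. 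Adding $-(2+a)(2+2a)$ yields the exact identity
\[
r\int_{\Sigma_r}\frac{\operatorname{div}V}{|\nabla b|}=\frac{B_2+S_1-4\pi\chi(\Sigma_r)}{A_1}+2-2a^2\ \ge\ \tfrac32a^2+2-2a^2=2-\tfrac12a^2 .
\]
As a check, on $\mathbb{R}^3$ with $G=1/|x|$ the identity reads $-8\pi/(4\pi)+2=0$, matching $\operatorname{div}V=0$ there.
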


The next proposition shows that the continuous function $a(r)$ satisfies the differential inequality
\[
ra'(r)\ge 1-\frac{a(r)^2}{4}
\]
in an integral sense. Its proof follows from the preceding lemma and the coarea formula.

\begin{proposition}[\cite{Colding-Minicozzi2025}*{Proposition 2.13}]
If $r_1<r_2$ are regular values of $b$ with $r_0\le r_1$, then
\begin{equation}
a(r_2)-a(r_1)\ge \int_{r_1}^{r_2}
\left(1-\frac{a(r)^2}{4}\right)\frac{dr}{r}.
\end{equation}
\end{proposition}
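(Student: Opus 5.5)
The plan mirrors the parabolic computation that gave \eqref{eq:parabolic-a-integral}: write the difference $a(r_2)-a(r_1)$ as a flux of the vector field $V$ from the preceding lemma, convert it to a volume integral by the divergence theorem, and use the coarea formula together with the lower bound for $r\int_{\{b=r\}}\operatorname{div}V/|\nabla b|$.

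\emph{Flux identity.} Since $r_1<r_2$ are regular values of $b$ and $b$ is proper on $M\setminus\{p\}$ by \eqref{eq:Green-vanish}, the region $\{r_1\le b\le r_2\}$ is compact. Its boundary consists of $\Sigma_{r_2}$, with outward normal $\boldsymbol n=\nabla b/|\nabla b|$, and $\Sigma_{r_1}$, with outward normal $-\boldsymbol n$. By the first identity of the preceding lemma, $\int_{\Sigma_r}\langle V,\boldsymbol n\rangle=2a(r)+2$. Because $V$ is $C^1$ (away from critical points, with the usual approximation otherwise, cf.\ \cite{Colding-Minicozzi2025}*{Appendix A}), we obtain
\[
2a(r_2)-2a(r_1)=\int_{\{r_1\le b\le r_2\}}\operatorname{div}V\,dV=\int_{r_1}^{r_2}\left(\int_{\Sigma_r}\frac{\operatorname{div}V}{|\nabla b|}\,d\sigma\right)dr .
\]

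\emph{Inserting the pointwise-in-$r$ bound.} For almost every $r$, which are regular values $\ge r_0$, the second inequality of the lemma gives
\[
\int_{\Sigma_r}\frac{\operatorname{div}V}{|\nabla b|}\ge \frac1r\left(2-\tfrac12 a(r)^2\right).
\]
Substituting and dividing by $2$ yields $a(r_2)-a(r_1)\ge\int_{r_1}^{r_2}\bigl(1-\tfrac{a^2}{4}\bigr)\frac{dr}{r}$, which is the claim.

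\emph{Main obstacle: regularity.} The only delicate step is justifying the divergence theorem and the coarea formula when the slab may contain critical points of $b$, where $V$ and the integrand $\operatorname{div}V/|\nabla b|$ can be singular. I would first handle this as in \cite{Colding-Minicozzi2025}. Critical values have measure zero by Sard's theorem, and the critical set has measure zero. One then integrates over $\{r_1\le b\le r_2\}\setminus\{|\nabla b|<\epsilon\}$, or replaces $|\nabla b^2|$ by $\sqrt{|\nabla b^2|^2+\epsilon^2}$, and lets $\epsilon\to0$. The boundary flux terms on the regular levels are unaffected. The lower bound on $\operatorname{div}V$ is of Bochner/Kato type, so it survives the limit via Fatou's lemma, and this gives the inequality in the stated direction.
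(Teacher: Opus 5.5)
Your argument is correct and follows the route the paper indicates (and carries out explicitly in the parabolic analogue \eqref{eq:parabolic-a-integral}): the flux identity $\int_{\Sigma_r}\langle V,\boldsymbol n\rangle=2a(r)+2$ turns $2a(r_2)-2a(r_1)$ into $\int_{\{r_1\le b\le r_2\}}\operatorname{div}V$; the coarea formula and the bound $r\int_{\Sigma_r}\operatorname{div}V/|\nabla b|\ge 2-\tfrac12 a(r)^2$ then give the inequality, and dividing by $2$ finishes. The paper does not discuss critical points, treating $V$ as $C^1$ and deferring to \cite{Colding-Minicozzi2025}*{Appendix A}, so your regularization remark goes beyond the paper's proof rather than departing from it.
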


\begin{corollary}[\cite{Colding-Minicozzi2025}*{Corollary 2.27}]\label{coro:monotone-a}
    If $r_1<r_2$ are regular values of $b$ with $r_0\le r_1$ and $a(r)\le 2$ for
$r_1\le r\le r_2$, then
\begin{equation}
a(r_2)\ge
2+\sqrt{\frac{r_1}{r_2}}\bigl(a(r_1)-2\bigr).
\end{equation}
\end{corollary}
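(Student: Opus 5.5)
The plan is to compare $a$ with the explicit solution of the associated ODE, as in Corollary~\ref{cor:parabolic-a-lower}. Set $w(r)\defeq 2-a(r)$. On $[r_1,r_2]$ we have $0\le w$ by hypothesis, and $w\le 2$ since $a>0$. Then
\[
1-\frac{a^2}{4}=\frac{(2-a)(2+a)}{4}=\frac{w(4-w)}{4}\ge \frac{w}{2},
\]
where the last inequality uses $w\le 2$. The integral inequality of the preceding proposition therefore gives, for regular $r_1\le s_1<s_2\le r_2$,
\[
w(s_2)-w(s_1)\le -\int_{s_1}^{s_2}\frac{w(r)}{2r}\,dr .
\]
The claimed bound is exactly $w(r_2)\le \sqrt{r_1/r_2}\,w(r_1)$. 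This is the solution of $rw'=-w/2$ with the same initial value at $r_1$.

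To carry this out, I would rescale by setting $z(r)\defeq r^{1/2}w(r)$ and aim to show that $z$ is nonincreasing on the regular values in $[r_1,r_2]$. The cleanest route is a Grönwall-type argument in integral form. Let $F(s)\defeq\int_{r_1}^{s}\frac{w(r)}{2r}\,dr$, which is continuous since $w$ is. The inequality above gives $w(s)\le w(r_1)-F(s)$. Moreover, $F$ is absolutely continuous with $F'(s)=w(s)/(2s)\le \bigl(w(r_1)-F(s)\bigr)/(2s)$ almost everywhere. Consequently $\bigl(s^{1/2}(w(r_1)-F(s))\bigr)'\ge 0$ fails in the wrong direction, so I would instead use
\[
\frac{d}{ds}\Bigl(s^{1/2}F(s)\Bigr)\le \frac{w(r_1)}{2}\,s^{-1/2}.
\]
Integrating from $r_1$ gives $F(s)\ge \dots$. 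Here the sign needs care, and the first thing I would try is the direct comparison argument from Corollary~\ref{cor:parabolic-a-lower}.

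Concretely, let $\psi(r)\defeq w(r_1)\sqrt{r_1/r}$, which satisfies $\psi(s_2)-\psi(s_1)=-\int_{s_1}^{s_2}\psi/(2r)\,dr$. Fix $\varepsilon>0$ and compare $w$ with $\psi_\varepsilon\defeq(w(r_1)+\varepsilon)\sqrt{r_1/r}$. Suppose there is a first $r_*\in(r_1,r_2]$ with $w(r_*)=\psi_\varepsilon(r_*)$; it exists by continuity of $w$, which follows from continuity of $a$. On $[r_1,r_*)$ we have $w<\psi_\varepsilon$. But the inequality with $w\ge 0$ gives
\[
w(r_*)\le w(r_1)-\int_{r_1}^{r_*}\frac{w}{2r}\,dr .
\]
This inequality points the wrong way for a direct contradiction, because subtracting a smaller quantity yields a larger bound. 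So I would instead run the argument via the monotone quantity: from $w(s_2)\le w(s_1)-\int_{s_1}^{s_2} w/(2r)$, any continuous $w$ satisfying this on all subintervals has $r^{1/2}w$ nonincreasing. To see this, note that the function $G(s)\defeq w(s)+\int_{r_1}^{s}w/(2r)$ is nonincreasing, and $w$ is then of bounded variation with $dw\le -\frac{w}{2r}dr$ as measures. The product rule for BV functions then gives $d(r^{1/2}w)=r^{1/2}dw+\tfrac12 r^{-1/2}w\,dr\le 0$.

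The main technical step is justifying this measure-theoretic product rule, or equivalently the passage from the integral inequality, valid only at regular values, to monotonicity of $r^{1/2}w$. I expect this to go through because regular values are dense and $a$ is continuous, so the inequality extends to all $r_1\le s_1<s_2\le r_2$. Once $r^{1/2}w$ is known to be nonincreasing, evaluating it at $r_2$ and $r_1$ gives $2-a(r_2)\le \sqrt{r_1/r_2}\,(2-a(r_1))$, which rearranges to the corollary.
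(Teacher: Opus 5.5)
Your final argument is correct. With $w=2-a$ and $0\le a\le 2$ on $[r_1,r_2]$, you have $1-\tfrac{a^2}{4}=\tfrac{w(4-w)}{4}\ge\tfrac{w}{2}$, so $w(s_2)-w(s_1)\le-\int_{s_1}^{s_2}\tfrac{w}{2r}\,dr$. This extends from regular values to all $r_1\le s_1<s_2\le r_2$ because regular values are dense and $a$ is continuous. Hence $G(s)=w(s)+\int_{r_1}^{s}\tfrac{w}{2r}\,dr$ is nonincreasing. Riemann--Stieltjes integration by parts (this is all the ``BV product rule'' you need, since $w$ is continuous and $r^{1/2}$ is $C^1$) then gives $s_2^{1/2}w(s_2)-s_1^{1/2}w(s_1)=\int_{s_1}^{s_2}r^{1/2}\,dG(r)\le 0$.

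The paper gives no proof of this statement and only cites Colding--Minicozzi. The natural comparison is therefore its own proof of the parabolic analogue, Corollary~\ref{cor:parabolic-a-lower}, which runs a barrier/first-contact argument directly on the nonlinear inequality. Your linearization is what produces the factor $\sqrt{r_1/r_2}$, and recasting the conclusion as monotonicity of $\sqrt{r}\,(2-a)$ is clean.

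The barrier route is shorter, though, and needs no measure theory. Your reason for abandoning it comes from setting it up wrongly: you integrated from $r_1$ over the region where $w<\psi_\varepsilon$. Instead, put $\psi(r)=w(r_1)\sqrt{r_1/r}$ and suppose $w(r)>\psi(r)$ for some $r\in(r_1,r_2]$. Let $s$ be the largest point of $[r_1,r]$ with $w(s)\le\psi(s)$. Then $w(s)=\psi(s)$ and $w>\psi$ on $(s,r]$, so
$w(r)\le w(s)-\int_s^r\tfrac{w}{2t}\,dt<\psi(s)-\int_s^r\tfrac{\psi}{2t}\,dt=\psi(r)$, a contradiction. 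No $\varepsilon$ is needed.

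In a written version, delete the exploratory lines. In particular, ``integrating from $r_1$ gives $F(s)\ge\dots$'' has the wrong sign: your differential inequality only yields the useless upper bound $F(s)\le w(r_1)\bigl(1-\sqrt{r_1/s}\bigr)$.

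Also state explicitly where $a\ge 0$ comes from, since the linearization fails for negative $a$. The paper asserts that $a$ is positive, and in fact Proposition~\ref{prop:first-derivative-A} gives $rA_1'\ge A_1$, i.e.\ $a\ge 1$.
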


We are now in a position to establish a quadratic lower bound for the growth of $A_1(r)$.
\begin{proposition}[\cite{Colding-Minicozzi2025}*{Corollary 2.33, proof pf Proposition 0.4}]\label{prop:quadric-growth-A}
Suppose that $M^3$ is a contractible complete Riemannian $3$-manifold with $R_g\ge 0$. If $A_1(r_0)>0$ for some $r_0>0$, then there exists a constant $c>0$ such that
\begin{equation}
A_1(r)\ge c r^2
\end{equation}
for all $r\ge r_0$.
\end{proposition}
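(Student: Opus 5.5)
We plan to show that the growth rate $a(r)=r(\log A_1)'(r)$ converges to $2$ at the rate $r^{-1/2}$ as $r\to\infty$, and then integrate $(\log A_1)'=a/r$. Throughout we stay in the standing setting of this subsection: for regular values $r\ge r_0$ the level sets $\Sigma_r$ are closed, connected, oriented surfaces of genus at least one. This is where contractibility (one end, $b_1=0$, hence connected level sets by Lemma~\ref{lem:levelset-connected}) enters. It is also what makes Proposition~\ref{prop:first-derivative-A}, the vector-field lemma, and Corollary~\ref{coro:monotone-a} available.

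\textbf{Step 1: a lower bound on $a$.} Since $S_1\ge 0$ (from $R_g\ge 0$) and $B_2\ge 0$, Proposition~\ref{prop:first-derivative-A} gives $rA_1'(r)\ge A_1(r)$. Using $A_1(r_0)>0$ and that $A_1$ is then nondecreasing, $A_1$ stays positive, $a$ is well defined and continuous, and $a(r)\ge 1$ for $r\ge r_0$.

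\textbf{Step 2: $a(r)\ge 2-C r^{-1/2}$ for all $r\ge r_0$.} Fix a regular value $r_1\ge r_0$. Take a regular value $r>r_1$ and split into two cases.
\begin{enumerate}
\item If $a\le 2$ on all of $[r_1,r]$, then Corollary~\ref{coro:monotone-a} gives
\[
a(r)\ge 2-\sqrt{r_1/r}\,\bigl(2-a(r_1)\bigr)\ge 2-\sqrt{r_1/r},
\]
where the last inequality uses $a(r_1)\ge 1$.
\item Otherwise, if $a(r)<2$, let $s\in[r_1,r)$ be the last point with $a(s)\ge 2$. By continuity $a(s)=2$, and $a\le 2$ on $[s,r]$. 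Applying Corollary~\ref{coro:monotone-a} on $[s,r]$ (approximating $s$ by regular values, using continuity of $a$) gives $a(r)\ge 2$, a contradiction.
\end{enumerate}
Hence, with $C=\sqrt{r_1}$, we get $a(r)\ge \min\{2,\,2-Cr^{-1/2}\}=2-Cr^{-1/2}$ for every regular $r\ge r_1$. By continuity of $a$ this extends to all $r\ge r_1$.

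\textbf{Step 3: integrate.} Since $A_1$ is $C^1$, we have
\[
\log\frac{A_1(r)}{A_1(r_1)}=\int_{r_1}^r\frac{a(s)}{s}\,ds\ge 2\log\frac{r}{r_1}-C\int_{r_1}^\infty s^{-3/2}\,ds .
\]
The last integral is finite, so $A_1(r)\ge c\,r^2$ for $r\ge r_1$. On the compact interval $[r_0,r_1]$ the bound holds after shrinking $c$, since $A_1>0$ there.

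\textbf{Main obstacle.} The delicate point is Step 2. Corollary~\ref{coro:monotone-a} only treats intervals on which $a\le 2$ and is stated at regular values, so the case analysis excluding a return below $2$ after $a$ has exceeded $2$ must be done carefully. This relies on the continuity of $a$ across critical values, which is supplied by the $C^1$ regularity of $A_1$ (cf. \cite{Colding-Minicozzi2025}*{Appendix A}), and on approximating the endpoint $s$ by regular values.
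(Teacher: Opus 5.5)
Your proposal is correct and follows the same route as the paper: obtain $a(r)\ge 2-c\,r^{-1/2}$ from Corollary~\ref{coro:monotone-a}, then integrate $(\log A_1)'=a/r$. Your version is more careful than the written proof, which applies the corollary on $[r_0,r]$ without checking its hypothesis $a\le 2$; your last-crossing argument supplies that check, and $a\ge 1$ together with a regular base point $r_1$ gives a uniform constant.
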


\begin{proof}
We first claim that there exists a constant $c>0$ such that, for all $r\ge r_0$,
\begin{equation}\label{eq:lower-bound-a}
a(r)\ge 2-\frac{c}{\sqrt r}.
\end{equation}
Since $a(r)$ is continuous and the regular values of $b$ are dense, it suffices to verify \eqref{eq:lower-bound-a} for regular values of $b$. By Corollary \ref{coro:monotone-a}, we have
\begin{equation}
a(r)\ge 2+\sqrt{\frac{r_0}{r}}\bigl(a(r_0)-2\bigr).
\end{equation}
If $a(r_0)<2$, we choose $c=(2-a(r_0))\sqrt{r_0}$, otherwise, we choose $c=0$.

Since $M$ has one end and first Betti number zero, the level sets of $b$ are connected, and each complement has exactly one bounded component and one unbounded component, along which $b\to\infty$. Therefore, all of the results established in this subsection apply.

By the claim just proved, there exist constants $c>0$ and $r_0>0$ such that, for all $r\ge r_0$,
\begin{equation}
r\frac{A_1'(r)}{A_1(r)}=a(r)\ge 2-\frac{c}{\sqrt r}.
\end{equation}
Integrating from $r_0$ to $r$, we obtain
\begin{equation}
\log\frac{A_1(r)}{A_1(r_0)}
=\int_{r_0}^r \frac{a(s)}{s}\,ds
\ge \int_{r_0}^r \left(2s^{-1}-cs^{-3/2}\right)\,ds
\ge 2\log\frac{r}{r_0}-2c r_0^{-1/2}.
\end{equation}
Exponentiating yields
\begin{equation}
\frac{A_1(r)}{A_1(r_0)}
\ge \frac{r^2}{r_0^2} e^{-2c r_0^{-1/2}}.
\end{equation}
This proves the desired quadratic lower bound for $A_1(r)$.
\end{proof}

\begin{proof}[Proof of  Proposition \ref{prop:Nonparabolic-exhaustion}]
We argue by contradiction. Suppose that there exists $r_0>0$ such that, for every regular value $r\ge r_0$, the level set
\[
\Sigma_r:=\{b=r\}
\]
is a smooth, closed, connected, orientable surface of genus at least one. If there exists $\delta>0$ with $A_1(r_0)\ge \delta$, then Proposition \ref{prop:quadric-growth-A} implies that, for all $r\ge r_0$,
\[
A_1(r)\ge c r^2,
\]
where $c=c(\delta,r_0)>0$.

On the other hand, in view of the assumption \eqref{eq:Green-vanish} that $G(x)\to 0$ as $x\to\infty$, the Cheng--Yau gradient estimate yields
\[
|\nabla G|(x)\to 0,\text{ as } x\to\infty.
\]
Consequently, we have
\begin{align*}
r^{-2}A_1(r)
&=r^{-2}\int_{\{b=r\} } |\nabla b|^2\, d\sigma \\
&=r^{-2}\int_{\{b=r\} } |\nabla G|\,|\nabla b|\, d\sigma \\
&\le  \, r^{-2}\int_{\{b=r\} } |\nabla b|\, d\sigma \sup_{\{b=r\}} |\nabla G| \\
&\to 0
\end{align*}
as $r\to\infty$, a contradiction.

This would force $A_1(r)=0$ for $r\ge r_0$, and hence $\nabla b=\nabla u=0$ in $\{b\ge r_0\}$, contradicting the fact that $u$ is a positive Green's function. Therefore, $M$ admits an exhaustion whose boundaries are spheres, as desired.
\end{proof}

\bibliographystyle{amsalpha} 
\bibliography{contractible}

\end{document}